\newtheorem{theorem}{Theorem}[section]
\newtheorem{lemma}[theorem]{Lemma}
\newtheorem{corollary}[theorem]{Corollary}
\newtheorem{proposition}[theorem]{Proposition}
\newtheorem{remark}[theorem]{Remark}
\newcommand{\A}{\mathsf{A}}
\newcommand{\Ag}{\mathsf{A}_g}
\newcommand{\M}{\mathsf{M}}
\newcommand{\Tg}{\mathsf{T}_g}
\newcommand{\Mg}{\mathsf{M}_g}
\newcommand{\nc}{\newcommand} 
\nc{\cH}{{\mathcal H}}
\nc{\cA}{{\mathcal A}}
\nc{\cG}{{\mathcal G}}
\nc{\cC}{{\mathcal C}}
\nc{\cO}{{\mathcal O}}
\nc{\cI}{{\mathcal I}}
\nc{\cB}{{\mathcal B}}
\nc{\cY}{{\mathcal Y}}
\nc{\cK}{{\mathcal K}} 
\nc{\cX}{{\mathcal X}}
\nc{\cS}{{\mathcal S}}
\nc{\cE}{{\mathcal E}}
\nc{\cF}{{\mathcal F}}
\nc{\cZ}{{\mathcal Z}}
\nc{\cQ}{{\mathcal Q}}
\nc{\cN}{{\mathcal N}}
\nc{\cP}{{\mathcal P}}
\nc{\cL}{{\mathcal L}}
\nc{\cM}{{\mathcal M}}
\nc{\cT}{{\mathcal T}}
\nc{\cW}{{\mathcal W}}
\nc{\cU}{{\mathcal U}}
\nc{\cJ}{{\mathcal J}}
\nc{\cV}{{\mathcal V}}
\nc{\bH}{{\mathbb H}}
\nc{\bA}{{\mathbb A}}
\nc{\bG}{{\mathbb G}}
\nc{\bC}{{\mathbb C}}
\nc{\bO}{{\mathbb O}}
\nc{\bI}{{\mathbb I}}
\nc{\bB}{{\mathbb B}}
\nc{\bY}{{\mathbb Y}}
\nc{\bK}{{\mathbb K}} 
\nc{\bX}{{\mathbb X}}
\nc{\bS}{{\mathbb S}}
\nc{\bE}{{\mathbb E}}
\nc{\bF}{{\mathbb F}}
\nc{\bZ}{{\mathbb Z}}
\nc{\bQ}{{\mathbb Q}}
\nc{\bN}{{\mathbb N}}
\nc{\bP}{{\mathbb P}}
\nc{\bL}{{\mathbb L}}
\nc{\bM}{{\mathbb M}}
\nc{\bT}{{\mathbb T}}
\nc{\bW}{{\mathbb W}}
\nc{\bU}{{\mathbb U}}
\nc{\bD}{{\mathbb D}}
\nc{\bJ}{{\mathbb J}}
\nc{\bV}{{\mathbb V}}
\nc{\bbZ}{{\mathbb Z}}
\nc{\bR}{{\mathbb R}}
\nc{\fr}{{\rightarrow}}
\nc{\co}{{\nabla}}
\newcommand{\la}{\longrightarrow}
\nc{\cu}{{\barline{\nabla}}}
\newcommand{\ra}{{\rightarrow}}
\begin{document}
\author{Paola Frediani, Gian Pietro Pirola}
\title {On the geometry of the second fundamental form of the Torelli map} 

\address{Universit\`{a} di Pavia} \email{paola.frediani@unipv.it}
\email{gianpietro.pirola@unipv.it} 

\thanks{The authors are members of GNSAGA of INdAM.
The authors were partially supported by national MIUR funds,
PRIN  2017 Moduli and Lie theory,  and by MIUR: Dipartimenti di Eccellenza Program
   (2018-2022) - Dept. of Math. Univ. of Pavia.
    } \subjclass[2000]{14H10;14H15;14H40;32G20}



\maketitle

   \date{}                         



\begin{abstract}

In this paper we give a geometric interpretation of the second fundamental form of the period map of curves and we use it to improve the  upper bounds on the dimension of a totally geodesic subvariety $Y$ of $\A_g$ generically contained in the Torelli locus obtained in \cite{cfg}, \cite{gpt}. We get $\dim Y \leq 2g-1$ if $g$ is even, $\dim Y \leq 2g$ if $g$ is odd. We also study totally geodesic subvarieties $Z$ of $\A_g$  generically contained in the hyperelliptic Torelli locus and we show that $\dim Z \leq g+1$. 
\end{abstract}

\section{Introduction}
The aim of this paper is to study the local geometry of the immersion given by the period map of curves. 
Let $\M_g$ be the moduli space of smooth complex projective curves
of genus $g$,  $\Ag$ the moduli space of principally polarized
abelian varieties of dimension $g$. Denote by $j : \M_g \fr \A_g$ the period map or Torelli map and by $\Tg$ the Torelli locus, that is the closure of $j(\Mg)$ in $\Ag$. We consider $ \A_g$ endowed with the (orbifold) metric induced by the symmetric metric on the Siegel space 
${\mathsf H}_g$ of which $ \A_g$  is a quotient by the symplectic group $Sp(2g, \bZ)$. We want to relate the Torelli locus  to the geometry of  $ \A_g$ considered as a locally symmetric variety (see also \cite{moonen-oort,cf1,cfg,deba} for motivation and related problems).  More precisely, we are interested in studying totally geodesic subvarieties $Y$ of $ \A_g$ which are generically contained in $\Tg$, i.e. such that $Y$ is contained in  $\Tg$ and $Y \cap j(\M_g) \neq \emptyset$.    
A totally geodesic subvariety $Y$ of $ \A_g$ is an algebraic subvariety which is the image of a totally geodesic submanifold $\tilde{Y} \subset  {\mathsf H}_g$. One expects that there exist very few totally geodesic subvarieties of $ \A_g$ generically contained in $\Tg$, at least for high genus $g$.  This is related with a conjecture of Coleman and Oort according to which, for $g$ sufficiently high, there should not exist Shimura subvarieties of $ \A_g$ generically contained in $\Tg$ (\cite{moonen-oort}.). In fact Shimura subvarieties of $ \A_g$ are those totally geodesic subvarieties of $ \A_g$ admitting a point with complex multiplication (\cite{moonen-JAG}). 

To study totally geodesic subvarieties of $ \A_g$ generically contained in the Torelli locus, we  compute the second fundamental form of the Torelli map. This also allows to study totally geodesic subvarieties that are not necessarily algebraic. The computation of the second fundamental form of the Torelli map  was initiated in \cite{cpt} and continued in \cite{cfg} and \cite{gpt}. In \cite{cfg} a bound for the maximal dimension of a germ of a totally geodesic subvariety $Y$ of  $ \A_g$ contained in $\Tg$ is given in terms of the gonality of a point $[C] \in  \M_g$ such that $j([C] ) \in Y$. From this one obtains a bound for the  the maximal dimension of a germ of a totally geodesic subvariety $Y$ of  $ \A_g$ contained in $\Tg$ for $g \geq 4$, which only depends on the genus: $\dim Y \leq \frac{5}{2}(g-1)$.  This bound was recently improved in \cite{gpt}, where it is proven that $\dim Y \leq (7g-2)/3$, using the Fujita decomposition of the Hodge bundle of a (real one-dimensional) family of abelian varieties given by a geodesic in ${\mathsf H}_g$ and results on the Massey products obtained in \cite{pt}, \cite{gst}.  

In this paper we further improve such a bound by developing the techniques  used in \cite{cfg}. 

We give a geometric interpretation of  one of the main results of \cite{cfg} where the second fundamental form was expressed in terms of a multiplication by an intrinsic double differential of the second kind on the product $C\times C$.  This form appears also in an unpublished book of Gunning \cite{gun}.

In this way we are able  to use a family of quadrics of rank 4 containing the canonical curve whose images via the second fundamental form give quadrics that can be simultaneously diagonalised on a suitable linear subspace of $H^1(T_C)$. This allows to give a better estimate on the rank of the second fundamental form. 

The main results are the following 
\begin{theorem}
\label{gonality}
If $C$ is a smooth curve of genus $g>5$, gonality $k$, it has no involutions and is not a smooth plane curve,  then any totally geodesic subvariety generically contained in the Torelli locus and passing through $j(C)$ has
dimension $$m\leq  \frac {3(g-1)}{2}+k.$$
\end{theorem}

From this, recalling that the gonality $k\leq [(g+3)/2]$, we obtain the following estimate (the cases $g=4,5$ follow from \cite[Thm.4.4]{cfg}). 

\begin{theorem}
Let $Y$ be a germ of a totally geodesic submanifold generically contained in the Torelli locus $\Tg$, $g>5$,  then  $\dim Y \leq 2g-1$ if $g$ is even, $\dim Y \leq 2g$ if $g$ is odd.
\end{theorem}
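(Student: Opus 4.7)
The plan is to derive the bound directly from Theorem~\ref{gonality} combined with the classical Brill--Noether bound $k\le [(g+3)/2]$ on the gonality of a smooth curve of genus $g$. Since $Y$ is generically contained in $\Tg$, the intersection $Y\cap j(\Mg)$ is open and non-empty in $Y$, so the first step is to pick a generic point $j(C)\in Y$ and attempt to apply Theorem~\ref{gonality} at $C$.

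Granted that $C$ satisfies the hypotheses of Theorem~\ref{gonality} (no involutions, not a smooth plane curve), the theorem yields
$$\dim Y \le \frac{3(g-1)}{2} + k \le \frac{3(g-1)}{2} + \left[\frac{g+3}{2}\right].$$
For $g$ even one has $[(g+3)/2]=g/2+1$, so the right-hand side equals $(4g-1)/2$; as $\dim Y$ is an integer, this forces $\dim Y\le 2g-1$. For $g$ odd one has $[(g+3)/2]=(g+3)/2$, and the bound becomes exactly $2g$. This produces both assertions, and it is essentially the only computation required.

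The main obstacle is thus not the arithmetic but the verification that a generic $j(C)\in Y$ corresponds to a curve meeting the hypotheses of Theorem~\ref{gonality}, i.e.\ that $Y$ is not forced entirely into the locus of curves admitting an involution or the locus of smooth plane curves. For $g>5$, the dominant component of the involution locus in $\Mg$ is the hyperelliptic locus $\cH_g$, of dimension $2g-1$; if $Y$ is generically hyperelliptic one invokes instead the independent bound $\dim Z\le g+1$ announced in the abstract, which is strictly sharper. Non-hyperelliptic involution loci (bielliptic and other quotient types) parametrise families of strictly smaller dimension in $\Mg$, already within our bound. Smooth plane curves occur only for the sporadic genera $g=(d-1)(d-2)/2$, and the delicate point is to verify, for each such $g>5$, that a totally geodesic subvariety generically parametrising plane curves still obeys the claimed bound --- presumably by combining the dimension count for the plane curve locus with the additional rigidity imposed by total geodesicity in $\Ag$.
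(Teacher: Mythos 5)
Your proof follows the paper's own argument exactly: the published proof of this theorem is precisely the one\-/line deduction from Theorem~\ref{gonality} together with the gonality bound $k\le[(g+3)/2]$, and your parity computation (the bound $2g-\tfrac12$ rounded down to $2g-1$ for $g$ even, exactly $2g$ for $g$ odd) is the same as theirs. What you add --- the verification that a generic point of $Y$ can be taken to satisfy the hypotheses of Theorem~\ref{gonality} --- is not in the paper at all; the authors apply that theorem without comment. Your treatment of the involution case is sound: a curve carrying an involution with quotient of genus $h$ moves in a family of dimension $2g-1-h$, so the non-hyperelliptic involution loci have dimension at most $2g-2$, and if $Y$ is generically hyperelliptic the paper's separate bound $\dim Y\le g+1$ for the hyperelliptic Torelli locus applies. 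The one point you explicitly leave unresolved (smooth plane curves) is genuinely delicate only for $g=6$: the locus of smooth plane quintics has dimension $12>11=2g-1$, so a pure dimension count does not close that case, and neither your ``presumably'' nor the paper's proof actually disposes of it. Since the paper carries the same silent gap, your write-up is, if anything, the more careful of the two; just be aware that the $g=6$ plane-quintic case is not finished by what you have written.
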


These techniques are also used to give a good estimate on the dimension of totally geodesic  subvarieties $Y$ of  $ \A_g$ generically contained in the hyperelliptic Torelli locus $\mathsf {TH}_g$, i.e. contained in the closure of the image of the hyperelliptic locus $j(\mathsf {HE}_g)$ and such that $Y \cap j(\mathsf {HE}_g) \neq \emptyset$. 

The analogous of the Coleman Oort conjecture for Shimura subvarieties generically contained in  the hyperelliptic Torelli locus was considered by Lu and Zuo in \cite{lu-zuo} where they proved that  for $g>7$ there do not exist Kuga curves generically contained in the hyperelliptic Torelli locus. In particular for $g>7$ there do not exist Shimura curves generically contained in the hyperelliptic Torelli locus. 

Our  techniques are different, since they are local, hence allow to study totally geodesic subvarieties which are not necessarily algebraic and of any dimension.

In fact recall that the Torelli map is an immersion outside the hyperelliptic locus but also if restricted to the hyperelliptic locus (\cite{os}), so it is possible to study the second fundamental form of the restriction of the Torelli map to the hyperelliptic locus (see also \cite{cf1}). 

We prove the following 

\begin{theorem}
Let $Y$ be a germ of a totally geodesic submanifold of ${\mathsf A}_g$ contained in $ \mathsf {TH}_g$ then $\dim Y \leq g+1$. 
\end{theorem}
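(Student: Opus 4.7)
The plan is to adapt to the hyperelliptic setting the strategy used for Theorem \ref{gonality}. By \cite{os} the Torelli map remains an immersion when restricted to $\mathsf{HE}_g$, and the second fundamental form of this restricted map was already considered in \cite{cf1}. At a point $[C]\in \mathsf{HE}_g$ with hyperelliptic involution $\sigma$, the tangent space to $\mathsf{HE}_g$ is the $\sigma$-invariant subspace
\[
T_{\mathsf{HE}_g,[C]}\;=\;H^1(T_C)^{+},
\]
of dimension $2g-1$. For a germ of totally geodesic submanifold $Y\subseteq\mathsf{A}_g$ contained in $\mathsf{TH}_g$ and meeting $j(\mathsf{HE}_g)$, the tangent space satisfies $T_Y\subseteq H^1(T_C)^{+}$.

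The decisive feature of the hyperelliptic case is that the canonical map $\varphi_K\colon C\to\bP^{g-1}$ factors through $C/\sigma\cong\bP^1$ onto the rational normal curve $R$ of degree $g-1$, whose homogeneous ideal is generated by the $\binom{g-1}{2}$ catalecticant $2\times 2$ minors, every one of which is a quadric of rank $\le 4$. Using the geometric interpretation of the second fundamental form developed in the earlier sections, each $Q\in I_2(R)$ determines a symmetric bilinear form $\rho(Q)$ on $H^1(T_C)^{+}$, obtained by multiplication with the intrinsic double differential of the second kind restricted to the $\sigma$-invariant part. The totally geodesic hypothesis forces $T_Y$ to be isotropic for every $\rho(Q)$.

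The next step is simultaneous diagonalization, as in the proof of Theorem \ref{gonality}. Since the catalecticant minors form a rich family of quadrics of rank $\le 4$ through $R$, the whole family $\{\rho(Q):Q\in I_2(R)\}$ can be simultaneously diagonalized in a well-chosen basis of $H^1(T_C)^{+}$. The bound $\dim T_Y\le g+1$ should then follow by counting the basis directions along which every form in this family vanishes identically.

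The main obstacle will be to organize the simultaneous diagonalization in a uniform way on the $\sigma$-invariant tangent space and to extract the sharp bound. Concretely, one realizes $H^1(T_C)^{+}$ in polynomial coordinates on $\bP^1$ via the identification $H^0(2K_C)^{+}\cong H^0(\bP^1,\mathcal{O}(2g-2))$, expresses the catalecticant quadrics as explicit polynomial relations, and reads off the dimension of the simultaneous kernel. Obtaining the sharp bound $g+1$, rather than a coarser estimate, will require exploiting the full catalecticant structure together with the polynomial realization of the second fundamental form, and verifying that the surviving isotropic subspace corresponds to a distinguished $(g+1)$-dimensional quotient dictated by the geometry of $R$.
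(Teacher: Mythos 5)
Your setup is right: $T_{[C]}Y$ sits inside $H^1(T_C)^{+}$, which has dimension $2g-1$, and the tool is isotropy of $T_{[C]}Y$ with respect to the images under the (dual) second fundamental form of rank~$\le 4$ quadrics through the rational normal curve. But the proposal stops exactly where the proof has to start. Your central claim --- that the \emph{whole} family $\{\rho(Q): Q\in I_2(R)\}$ can be simultaneously diagonalized in a single basis of $H^1(T_C)^{+}$ --- is not justified and is not what happens; simultaneous diagonalization is only available on a carefully chosen subspace, and only for a carefully chosen pencil of quadrics. Moreover, ``counting the basis directions along which every form vanishes identically'' is not the mechanism that produces $g+1$: the bound comes from a transversality statement ($T_{[C]}Y$ meets a certain $(g-2)$-dimensional subspace only in $0$), not from locating a common kernel of the forms.

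The missing construction is the following. Take $L$ the $g^1_2$ with $H^0(L)=\langle x,y\rangle$, set $M=K_C(-L)\cong L^{\otimes(g-2)}$, and choose a generic $t\in H^0(M)$; its divisor is $\sigma$-invariant, $D(t)=q_1+\sigma(q_1)+\cdots+q_{g-2}+\sigma(q_{g-2})$. Write $H^0(M)=\langle t\rangle\oplus W$ and for $s\in W$ form the rank~$4$ quadric $Q_s=(xt)\odot(ys)-(xs)\odot(yt)$. The invariant Schiffer combinations $v_i=\xi_{q_i}+\xi_{\sigma(q_i)}$ span a $(g-2)$-dimensional $V\subset H^1(T_C)^{+}$, and the formula of Theorem \ref{copito} shows $\rho_{HE}(Q_s)(v_i\odot v_j)=0$ for $i\ne j$ (because $q_i,q_j,\sigma(q_j)$ all lie on $D(t)$, hence on $Z(Q_s)$), while $\rho_{HE}(Q_s)(v_i\odot v_i)$ is a nonzero multiple of $\mu_{1,M}(s\wedge t)(q_i)$, i.e.\ essentially $s(q_i)$. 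Choosing $s_i\in W$ vanishing at all $q_j$, $j\ne i$, but not at $q_i$ (possible by the identification $H^0(M)\cong H^0(\bP^1,\mathcal{O}(g-2))$), one concludes that any subspace $T$ isotropic for all the $\rho_{HE}(Q_{s_i})$ satisfies $T\cap V=\{0\}$, whence $\dim Y+(g-2)\le 2g-1$, i.e.\ $\dim Y\le g+1$. Without this choice of $t$, of the subspace $V$, and of the sections $s_i$, the ``polynomial realization'' you invoke does not by itself produce the estimate, and the catalecticant generators of $I_2(R)$ play no direct role in the argument.
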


 For $g=3$ we get a better bound, namely 
 \begin{proposition}
Let $Y$ be a germ of a totally geodesic submanifold of ${\mathsf A}_3$,  contained in the hyperelliptic Torelli locus, then $\dim Y \leq 3$. 
\end{proposition}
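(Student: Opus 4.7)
The plan is to refine the proof of Theorem 1.3 in the specific case $g=3$, exploiting the fact that the hyperelliptic Torelli locus $\mathsf{TH}_3$ is a hypersurface in $\A_3$. Let $[C]\in\mathsf{HE}_3$ be a general point with $j([C])\in Y$, and let $\iota$ denote the hyperelliptic involution of $C$. By the immersion result of \cite{os}, the differential $dj$ identifies $H^1(T_C)^{\iota}$ with a $5$-dimensional subspace of $T_{j([C])}\A_3$, and since $\dim\A_3 = 6$, the conormal $N^*_{\mathsf{TH}_3/\A_3}$ at $j([C])$ is a single line. It is generated by the unique non-zero element $\Gamma$ of the kernel of the multiplication $\mu:\mathrm{Sym}^2 H^0(K_C)\to H^0(2K_C)$, namely the equation of the smooth conic in $\bP^2$ which is the image of the canonical map of $C$.

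Since $Y\subset\mathsf{TH}_3$ is totally geodesic in $\A_3$, the second fundamental form of $\mathsf{TH}_3$ inside $\A_3$, restricted to $T_{j([C])}Y$, must vanish. With values in a $1$-dimensional line, this form reduces, up to a non-zero scalar, to a single symmetric bilinear form $Q$ on $H^1(T_C)^{\iota}\cong\bC^5$, and $T_{j([C])}Y$ must be isotropic for $Q$. Over $\bC$ the maximal isotropic subspace of a rank-$r$ quadratic form on $\bC^5$ has dimension $5-\lceil r/2\rceil$, so the desired bound $\dim Y\leq 3$ is equivalent to $\operatorname{rank}(Q)\geq 3$. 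Note that the general bound $\dim Y\leq g+1=4$ from Theorem 1.3 corresponds only to the estimate $\operatorname{rank}(Q)\geq 1$, so one really has to squeeze two extra units of rank out of the special geometry available at $g=3$.

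It remains to estimate $\operatorname{rank}(Q)$. Using the interpretation of the second fundamental form of the Torelli map as multiplication by an intrinsic double differential of the second kind developed in the present paper, $Q$ becomes the explicit quadratic form obtained by pairing this double differential with $\Gamma$. I would identify $H^1(T_C)^{\iota}$ with the tangent space to the moduli of $8$ unordered points on $\bP^1$ modulo $\mathrm{PGL}(2,\bC)$ via the branch points $p_1,\dots,p_8$ of the hyperelliptic cover $C\to\bP^1$, and in these coordinates write $Q$ explicitly. The main obstacle is then the direct verification that $\operatorname{rank}(Q)\geq 3$. A natural strategy is to specialize to a hyperelliptic curve with large automorphism group (for instance $y^2=x^8-1$), so that $Q$ is diagonalized by the isotypic decomposition, check the rank bound on this special fibre by inspection, and then conclude for a general $[C]\in\mathsf{HE}_3$ by semicontinuity of the rank.
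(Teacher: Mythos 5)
Your reduction is the same as the paper's: for $g=3$ the space $I_2(K_C)$ (equivalently the conormal line of $\mathsf{TH}_3$ in $\A_3$ at $j([C])$) is one-dimensional, generated by the conic $\Gamma$, the tangent space $T_{j([C])}Y$ must be isotropic for the single quadric $\rho(\Gamma)$ on $H^1(T_C)^{\iota}\cong\bC^5$, and everything comes down to showing $\operatorname{rank}\rho(\Gamma)\geq 3$. But that rank bound is exactly the point, and you do not prove it: you only outline a strategy (write $\rho(\Gamma)$ in coordinates via the double differential $\hat\eta$, specialize to $y^2=x^8-1$, check by inspection, spread out by semicontinuity). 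The first step of that strategy is the serious obstacle --- the paper explicitly remarks that $\hat\eta$ is ``very mysterious'' and that an actual computation of it seems very hard, so ``write $Q$ explicitly in these coordinates'' is not something you can take for granted, and no inspection is actually carried out. The paper avoids any computation of $\hat\eta$: it proves (Proposition 2.6) that for \emph{any} rank-$3$ quadric $Q\in I_2(K_C)$ one has $\operatorname{rank}\rho(Q)\geq 3$, by analyzing the decomposition $Z(\rho(Q))=Z(Q)\cup Z(\hat\eta)$ in $C\times C$ and observing that a rank $\leq 2$ form $XY$ would give a zero divisor without multiple components, while $Z(Q)$ for a rank-$3$ quadric is $2R_h+2F$; only qualitative properties of $\hat\eta$ (its nonvanishing on the diagonal) enter. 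Your proof is therefore incomplete at its central step.

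There is also a secondary flaw in the proposed semicontinuity argument. Lower semicontinuity of the rank would give $\operatorname{rank}\rho(\Gamma)\geq 3$ only on a dense open subset of $\mathsf{HE}_3$, whereas the statement concerns an arbitrary germ $Y$ contained in the hyperelliptic Torelli locus; the curves parametrized by $Y$ need not be general in $\mathsf{HE}_3$ and could a priori lie entirely in the closed locus where the rank drops. So even if the computation on the special fibre were supplied, you would only have bounded $\dim Y$ for germs meeting that open set, not for all germs as required. A uniform argument valid at every hyperelliptic $[C]$, such as the one the paper gives, is needed.
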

 
Notice that for low genus there are examples of Shimura (hence totally geodesic) subvarieties of  ${\mathsf A}_g$ contained in  $ \mathsf {TH}_g$, namely the ones given by families (8), (22), (36), (39) of Table 2 of \cite{fgp} (see also Tables 1 of \cite{moo} for family (8), Table 2 of \cite{moonen-oort} for family (22)).  Family (8) yields a 2-dimensional Shimura subvariety generically contained in  $\mathsf {TH}_3$, while families (22), (36), (39) yield one dimensional Shimura subvarieties  generically contained respectively in  $\mathsf {TH}_3$,  $\mathsf {TH}_4$,  $\mathsf {TH}_5$. \\

{\bfseries \noindent{Acknowledgements.} }  
The second author would like to thank Indranil Biswas for the very
interesting conversations during his visit at Tata Institute of
fundamental research, Mumbai in April 2019. 

\section{Second fundamental form}
In this section we recall the definition and the results on the second fundamental form of the Torelli locus obtained in \cite{cpt} and in \cite{cfg} and we give a geometric interpretation of the form $\hat{\eta}$ introduced in \cite{cfg} .

Let $\mathsf{M}_g$ be the moduli space of smooth complex algebraic curves of genus $g$, and let $\mathsf{A}_g$ be the moduli space of principally polarised abelian varieties of dimension $g$. Denote by $j: \mathsf{M}_g \rightarrow  \mathsf{A}_g$ the Torelli map. Both $\mathsf{M}_g$ and $\mathsf{A}_g$ are complex orbifolds. 
The space  $\mathsf{A}_g$ is the quotient of the Siegel space ${\mathsf H}_g$ by the action of the  symplectic group $Sp(2g, \bZ)$, hence  $\mathsf{A}_g$ is endowed with the orbifold locally symmetric metric (called the Siegel metric) which is induced by the symmetric metric on the Siegel space ${\mathsf H}_g$. 
We denote by $\nabla$ the corresponding Levi Civita connection. Recall that the Torelli map is an orbifold immersion outside the hyperelliptic locus and also restricted to the hyperelliptic locus (\cite{os}). 

Assume now that $g \geq 4$. 
Outside the hyperelliptic locus  we have the following exact sequence of tangent bundles associated to the orbifold immersion  $j: \mathsf{M}_g \rightarrow  \mathsf{A}_g$ evaluated at a non hyperelliptic curve $[C]$:
\begin{equation}\label{short_exact_intro}
0 \ra T_{[C]} \mathsf{M}_g \xrightarrow{dj} T_{([JC],\Theta)} \mathsf{A}_g \xrightarrow{\pi} N_{[JC],\Theta} \ra 0. 
\end{equation}
Consider its dual 
\begin{equation}
0 \ra I_2(K_C) \ra S^2H^0(K_C) \ra H^0(2K_C) \ra 0.
\end{equation}

Denote by 
\begin{equation}
\label{II}
II: S^2 T_{[C]} \mathsf{M}_g \longrightarrow  N_{([JC],\Theta)}
\end{equation}
 the second fundamental form of the Torelli map, and by 
 \begin{equation}
 \label{rho}
 \rho:= II^{\vee} : I_2(K_C) \ra S^2H^0(C,2K_C)\cong S^2H^1(C,T_C)^\vee
 \end{equation}
 its dual.

We shall state a result obtained in \cite{cpt} which gives an expression of $\rho(Q)(v \odot w)$, where $Q \in I_2(K_C)$ and  $v, w \in H^1(C, T_C)$ are Schiffer variations. First let us recall the definition of a Schiffer variation at a point $p \in C$.

Consider the exact sequence 
\begin{equation}
0 \ra T_C \ra T_C(p) \ra T_C(p)_{|p} \ra 0.
\end{equation}
The coboundary map gives an injection $\delta: H^0(C,T_C(p)_{|p} )  \hookrightarrow H^1(C, T_C)$. A Schiffer variation at $p$ is a generator of $\delta(H^0(C,T_C(p)_{|p} ) $. 
Choose a local coordinate $(U,z)$ at $p$ and $b$ a bump function which is equal to 1 in a neighbourhood of $p$. Then the form  $\theta:= \frac{\bar{\partial}b}{z} \cdot \frac{\partial}{\partial z}\in A^{0,1}(T_C)$ is a Dolbeault representative of a Schiffer variation at $p$. One can easily check that the map 
$$\xi: T_pC \ra H^1(C, T_C), \ u:= \lambda \frac{\partial}{\partial z}(p) \mapsto \xi_u:= \lambda^2 [\theta]$$
is independent of the choice of the local coordinate $z$.

Consider a curve $C$ of genus $g \geq 2$, and take a point $p \in C$. The space $H^0(C,K_C(2p))$ of meromorphic $1$-forms on $C$ with a double pole on $p$ injects into $H^1(C \setminus \{p\}, \mathbb{C}) \cong H^1(C, \mathbb{C})$.  In fact, if a meromorphic one form $\omega \in H^0(C,K_C(2p))$  were exact, there would exist a meromorphic  function $f$ on $C$ with a simple pole at $p$ and holomorphic elsewhere such that $\omega = df$, hence $C$ would be isomorphic to ${\mathbb P}^1$. 
 Denote by $j_p: \; H^0(C,K_C(2p)) \hookrightarrow H^1(C,\mathbb{C})$ this injection. 

Observe that $h^0(C,K_C(2p))=g+1$ and  $H^0(C, K_C) \subset H^0(C, K_C(2p))$ is mapped by $j_p$ onto $H^{1,0}(C)$, hence the preimage  $j_p^{-1}(H^{0,1}(C))$ has dimension $1$. Fix a local coordinate $(U,z)$ centered in $p$. Then there exists a unique element $\phi$ in this line whose expression on $U\setminus \{p\}$ is
\begin{equation}
\phi := \bigg( \frac{1}{z^2}+h(z) \bigg) dz,
\end{equation}
where $h$ is a holomorphic function. The form $\eta_p$ is defined as follows:
\begin{equation}\begin{split}
\eta_p: T_pC &\longrightarrow H^0(C,K_C(2p)), \\
u=\lambda \frac{\partial}{\partial z}(p) &\longmapsto \eta_p(u)=\lambda \phi.
\end{split}\end{equation}
One can easily prove that $\eta_p$ is independent of the choice of the local coordinate. 

Denote by $\mu_2: I_2(K_C) \ra H^0(4K_C)$ the second gaussian map of the canonical bundle (\cite{wh}). 

Let us now state a result of \cite{cpt}. 

\begin{theorem}[Colombo, Pirola, Tortora  ~\cite{cpt}]
\label{copito}
Let $C$ be a non-hyperelliptic curve of genus $g \geq 4$. Let $p,q \in C$ and $u \in T_p C$, $v \in T_q C$. Then:
\begin{equation}\label{rhomu_equazione}\begin{split}
&\rho (Q) (\xi _u \odot \xi _v) = -4 \pi i \eta _{p} (u)(v)Q(u,v), \\
&\rho(Q) (\xi_u \odot \xi_u) = -2 \pi i \mu _2 (Q) (u^{\otimes 4}).
\end{split}\end{equation}
\end{theorem}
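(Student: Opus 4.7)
The plan is to compute $\rho(Q)(\xi_u \odot \xi_v)$ directly from the infinitesimal description of the second fundamental form of the period map, specialized to Dolbeault representatives of Schiffer variations. Recall that the differential of the Torelli map sends $v \in H^1(T_C)$ to the Kodaira--Spencer homomorphism $v_*\colon H^0(K_C) \to H^1(\mathcal{O}_C)$, and that the standard formalism (Griffiths) expresses $\rho(Q)$, for $Q = \sum q_{ij}\,\omega_i \otimes \omega_j \in S^2 H^0(K_C)$ a lift of a quadric in $I_2(K_C)$, as a quadratic form on $H^1(T_C)$ built from iterated cup products of $v,w$ with the $\omega_i, \omega_j$ paired by Serre duality. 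The defining relation $\sum q_{ij}\,\omega_i \omega_j = 0$ in $H^0(2K_C)$ is precisely what makes this expression independent of the chosen lift of $Q$.

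For the off-diagonal case $p \neq q$, I would use the explicit Dolbeault representatives $\theta_p$ of $\xi_u$ and $\theta_q$ of $\xi_v$, supported in disjoint coordinate disks. The cup product $\xi_u \cdot \omega_i$ is a $(0,1)$-form localized near $p$, and the Serre pairings entering the formula reduce by Stokes to residues. Inserting the meromorphic form $\eta_p(u) \in H^0(K_C(2p))$ as the unique primitive with the prescribed double-pole principal part at $p$ (this is the characterization of $\eta_p$ given above), the total pairing collapses to the product of the holomorphic value of $\eta_p(u)$ at $q$ contracted with $v$ and the bilinear evaluation $Q(u,v) = \sum q_{ij}\,\omega_i(u)\,\omega_j(v)$ of $Q$ along the canonical image. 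Assembling the pieces and tracking the $2\pi i$ factors coming from Cauchy's formula produces the first identity with the stated constant $-4\pi i$.

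The diagonal case $\xi_u \odot \xi_u$ cannot be obtained from the off-diagonal one by setting $q=p$ since $\eta_p(u)(u)$ is undefined. My approach is to verify the off-diagonal formula first, and then let $q \to p$ along $C$, extracting the finite part of the Taylor expansion of $\eta_p(u)(v)\,Q(u,v)$ in a local coordinate at $p$. The hypothesis $Q \in I_2(K_C)$ forces the canonical quadratic form $\sum q_{ij}\,\omega_i \omega_j$ to vanish identically on $C$, which kills the two leading terms of the expansion; the first surviving term is the fourth-order jet of this quadratic form at $p$ in the direction $u$, which is by Wahl's definition equal to $\mu_2(Q)(u^{\otimes 4})$. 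A careful count of the symmetrization combinatorics on the diagonal (versus the polarized off-diagonal case) converts $-4\pi i$ into $-2\pi i$, yielding the second identity.

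The main obstacle is the diagonal limit: showing that the polar contributions cancel cleanly and identifying the leading finite term with $\mu_2(Q)(u^{\otimes 4})$ in the correct normalization requires a careful local expansion and agreement with Wahl's convention for the second Gaussian map. A clean route around the explicit pole bookkeeping is to fix normalizations once on the dense open off-diagonal locus $p\neq q$, and then appeal to the intrinsic characterization of $\mu_2$ as the restriction to the zero locus of $Q$ of the first Gaussian of $Q$: this makes both the appearance of the tensor power $u^{\otimes 4}$ and the overall factor $-2\pi i$ transparent from the residue picture already set up in the off-diagonal computation.
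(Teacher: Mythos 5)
First, a point of reference: the paper does not prove Theorem \ref{copito} at all --- it is quoted from \cite{cpt} --- so the only meaningful comparison is with the argument of Colombo--Pirola--Tortora. Your overall strategy (explicit Dolbeault representatives $\frac{\bar\partial b}{z}\frac{\partial}{\partial z}$ for Schiffer variations, localization of the pairings to residues at $p$ and $q$, the appearance of a meromorphic primitive with a double pole at $p$) is indeed the strategy of the source. Your one genuine departure, deriving the diagonal identity as the limit $q \to p$ of the off-diagonal one rather than by a separate direct computation, is legitimate and rather clean: $\rho(Q)$ is a fixed bilinear form and $p \mapsto \xi_{u_p}$ varies holomorphically, so the left-hand side is continuous in $(p,q)$; on the right-hand side $\tilde Q$ vanishes to order $2$ on $\Delta$, so writing $F(z,w) = \sum q_{ij} f_i(z) f_j(w)$ one has $F(0,w) = \tfrac{1}{2} w^2 \sum q_{ij} f_i(0) f_j''(0) + O(w^3)$, and pairing this against the double pole of $\eta_p(u)$ produces the finite limit $\tfrac{1}{2}\mu_2(Q)(u^{\otimes 4})$; the factor $\tfrac12$ is exactly what converts $-4\pi i$ into $-2\pi i$. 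So that part of your plan works.

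The genuine gap is at the starting point. The sentence asserting that ``the standard formalism (Griffiths) expresses $\rho(Q)$ \dots as a quadratic form built from iterated cup products paired by Serre duality'' is not background: it is the main technical content of \cite{cpt}. One must prove that the second fundamental form of $j$ with respect to the Levi--Civita connection of the locally symmetric Siegel metric on $\A_g$ is computed by the Hodge--Gaussian expression $\sum q_{ij}\langle \mu(v\cdot\omega_i),\, w\cdot\omega_j\rangle$, where $\mu$ is the harmonic lift that exists because $Q\in I_2(K_C)$ forces $\sum q_{ij}(v\cdot\omega_i)\,\omega_j$ to be exact; this requires an explicit description of the connection and the normal projection on the Siegel space, and it is where the constants $-4\pi i$, $-2\pi i$ actually come from. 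Without it, your residue computation evaluates a well-defined pairing but nothing identifies that pairing with $II^{\vee}$. A second, smaller gap: you characterize $\eta_p(u)$ as ``the unique primitive with the prescribed double-pole principal part,'' but elements of $H^0(K_C(2p))$ with that principal part form an affine space over $H^0(K_C)$; uniqueness requires the additional condition $j_p(\phi)\in H^{0,1}(C)$, and it is precisely the harmonicity of the lift $\mu$ that selects this representative. Since $Q(u,v)$ does not vanish for generic $q$, an error by a holomorphic $1$-form here would change the off-diagonal formula, so this normalization must be argued, not assumed.
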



 Theorem \ref{copito} is used  in \cite{cf1} to compute the holomorphic sectional curvature of  $\mathsf{M}_g$ with respect to the Siegel metric along the Schiffer variations.

 Let us now recall a more intrinsic description of the form $\eta_p$  obtained in \cite{cfg}.

  Let $S:= C\times C$, let $\Delta$ denote the diagonal and let $\pi_1, \pi_2:
  S \ra C$ be the projections ${\pi_1}(x,y) =x$, $\pi_2(x,y) = y$.  Then $ K_S =
  \pi_1^*K_C \otimes \pi_2^*K_C$. Consider the line bundle $L:= K_S(2\Delta)$
  on $S$ and set  $ V:={\pi_1}_*(\pi_2^*K_C ( 2\Delta))$, $ E:={\pi_1}_* L \cong  K_C \otimes V$  by the projection formula. We have $H^0(
  \pi_1^{-1}(x) , \pi_2^*K_C (2\Delta)) \cong H^0 (C,K_C (2x)) $, hence $V$ is a holomorphic vector bundle on $C$
  with fibre $V_x \cong H^0( C , K_C ( 2x)) $ and the map $x\mapsto
  \eta_x$ is a section of $ E$ that we call $\eta$.
  Since $E={\pi_1}_* L$ there is an isomorphism $ H^0(C,E) \cong H^0(S,L)$ and $\eta$ corresponds to a global section $\hat{\eta} \in  H^0(S,L)$ such that  for  $u\in
  T_xC$ and $v\in T_y C$ with $x\neq y$, we have $ \eta_x
  (u)(v)=\hat{\eta} (u,v) $.

\begin{proposition}[\cite{cfg}]
 The section $\eta$  of $E$ is holomorphic.
Moreover the form $\hat{\eta} $ is symmetric, i.e. $\hat{\eta} (u,v) = \hat{\eta} (v,u)$.

\end{proposition}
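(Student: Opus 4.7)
For holomorphy, I plan to argue entirely on $C$ using the rank $g+1$ vector bundle $V$. The cohomology morphism $j\colon V \to \cO_C \otimes H^1(C,\bC)$ is holomorphic because, locally on $C$, $j_x$ is realized by integrating the holomorphic family $V_x$ against a fixed symplectic basis of $H_1(C,\bZ)$ chosen in $C$ away from $x$. Hence $\cL' := j^{-1}(\cO_C \otimes H^{0,1}(C))$ is a holomorphic subbundle of $V$; by the dimension count in the text it has constant rank $1$, and it meets $H^0(K_C)\otimes\cO_C$ trivially (since $j$ sends the latter isomorphically onto $H^{1,0}(C)$). The composition $\cL' \hookrightarrow V \twoheadrightarrow V/(H^0(K_C)\otimes\cO_C)$ is therefore an isomorphism of line bundles, and the quotient on the right is canonically $T_C$ via the residue-free principal-part map (the coefficient $a$ in $a\,dz/z^2$ transforms as $a\,\partial_z$). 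Under this identification the map $u \mapsto \eta_x(u)$ becomes the identity $T_xC \to T_xC$, so $\eta$ corresponds to the holomorphic constant section $1\in H^0(K_C\otimes\cL')=H^0(\cO_C)\subset H^0(E)$.

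For the symmetry, write $\hat\eta = G(x,y)\,dx\wedge dy/(x-y)^2$ in a local chart, with $G$ holomorphic. The principal-part normalization gives $G(x,x)=1$, and the residue theorem applied to the restrictions of $\hat\eta$ to the compact fibers $\{x_0\}\times C$ and $C\times\{x_0\}$ of $\pi_1$ and $\pi_2$ (each being a meromorphic $1$-form with a single pole) forces $\partial_xG(x,x) = \partial_yG(x,x) = 0$. The statement $\hat\eta(u,v) = \hat\eta(v,u)$ is equivalent to $G(x,y) = G(y,x)$. I plan to prove this by Fubini. For $\omega_1, \omega_2 \in H^0(K_C)$ consider the regularized integral
\[
I(\omega_1,\omega_2) := \int_S \hat\eta \wedge \pi_1^*\overline{\omega_1} \wedge \pi_2^*\overline{\omega_2}.
\]
Integrating first in $y$, the inner integral $\int_y \eta_x \wedge \overline{\omega_2}$ is the Poincar\'e pairing $[\eta_x]\cup[\overline{\omega_2}]$, which vanishes because $[\eta_x]\in H^{0,1}$ is isotropic against $H^{0,1}$; hence $I=0$. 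In the other order, $J(y) := \int_x \hat\eta(x,y) \wedge \overline{\omega_1}(x)$ is a priori only smooth, but the decomposition
\[
\hat\eta = d_x\!\left(\frac{-G}{x-y}\right) \wedge dy + \frac{\partial_xG}{x-y}\,dx\wedge dy,
\]
in which the second summand is globally holomorphic on $S$ because $\partial_xG$ vanishes on $\Delta$, combined with $d\overline{\omega_1}=0$ and Stokes, shows $J \in H^0(K_C)$. Then $I=0$ reads $\int_C J \wedge \overline{\omega_2} = 0$ for every $\omega_2$, and Serre duality forces $J\equiv 0$. Unpacking, for every $y_0 \in C$ the form $x\mapsto\hat\eta(x,y_0)/dy|_{y_0}$ lies in $L_{y_0} = j_{y_0}^{-1}(H^{0,1}(C))$ with the same normalized principal part as $\eta_{y_0}$, so by uniqueness of the element of $L_{y_0}$ with prescribed principal part it equals $\eta_{y_0}$, yielding $G(x,y) = G(y,x)$.

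The main technical point is the regularization required to make the Fubini step rigorous, since $\int_S \hat\eta \wedge \pi_1^*\overline{\omega_1} \wedge \pi_2^*\overline{\omega_2}$ diverges as written due to the double pole of $\hat\eta$ along $\Delta$. The standard fix is to subtract the $d_x$- (or $d_y$-) exact singular part of $\hat\eta$ near $\Delta$ and verify that the corresponding boundary integrals over small circles encircling $\Delta$ vanish in the limit; this last step hinges crucially on $\partial_xG(x,x) = \partial_yG(x,x) = 0$, which is itself built into $\hat\eta$ through the residue theorem on the fibers.
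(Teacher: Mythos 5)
Your argument is correct; note that the paper itself offers no proof of this proposition, deferring entirely to \cite{cfg}, so the comparison is with the standard argument rather than with anything printed here. Your holomorphy proof --- exhibiting $\cL'=j^{-1}(\cO_C\otimes H^{0,1}(C))$ as a holomorphic line subbundle of $V$ via holomorphy of the period map, identifying $V/(H^0(K_C)\otimes\cO_C)\cong T_C$ by residue-free principal parts, and recognizing $\eta$ as the constant section $1$ of $K_C\otimes\cL'\cong\cO_C$ --- is the natural Hodge-theoretic argument and is essentially the one in \cite{cfg}. For the symmetry you take a genuinely more analytic route: the classical proof runs through the reciprocity/residue formalism for differentials of the second kind (the cup product of a second-kind differential with a closed form computed by residues of a local primitive), which never produces a divergent integral; you instead run a regularized Fubini--Stokes computation on $S$. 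Your version works, and the two facts you isolate --- $G(x,x)=1$ and $\partial_xG(x,x)=\partial_yG(x,x)=0$ from the residue theorem on the fibers --- are precisely what make the boundary contributions on $\{|x-y|=\epsilon\}$ vanish when pairing against $(0,1)$-forms, so the principal values agree with the cup products and Serre duality closes the argument; the payoff of your route is that it makes transparent that symmetry is forced solely by the normalization $[\eta_x]\in H^{0,1}$ together with the isotropy of $H^{0,1}$. Two small repairs to the write-up: the second summand $\frac{\partial_xG}{x-y}\,dx\wedge dy$ in your decomposition is only defined in a product chart near $\Delta$ (not ``globally holomorphic on $S$''), so it must be cut off by a bump function before Stokes is applied over a whole fiber; and the interchange of $\lim_{\epsilon\to 0}$ with the outer integration requires the (easy, by compactness) uniformity of the convergence of the inner principal values. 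Neither is a gap in substance.
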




The exact sequence on the surface $S$
$$0 \to K_S(-\Delta) \rightarrow K_S \rightarrow  {K_S}_{|\Delta} \rightarrow 0,$$
induces an exact sequence on  global sections

$$0 \to H^0( K_S(-\Delta) )\rightarrow H^0(K_S) \rightarrow  H^0({K_S}_{|\Delta}) \rightarrow 0.$$
With the natural identifications  $H^0(K_S) \cong H^0(K_C) \otimes H^0(K_C)$,  $H^0({K_S}_{|\Delta}) \cong H^0(2K_C)$, the restriction to the diagonal is identified with the multiplication map $H^0(K_C) \otimes H^0(K_C) \to  H^0(2K_C)$. Hence its kernel $H^0( K_S(-\Delta) )$ is isomorphic to $\Lambda^2(H^0(K_C)) \oplus I_2(K_C)$.  Since
  elements of $I_2(K_C)$ are symmetric, they are in fact contained in
  $H^0(S,K_S(-2\Delta))$. So if $Q\in I_2(K_C)$,  the section $Q\cdot
\hat{\eta} $ lies in $H^0(S,2K_S) \cong H^0(C, 2K_C) \otimes H^0(C,
  2K_C)$.
\begin{theorem}[\cite{cfg}]
  \label{prodotto} With the above identifications, if $C$ is
  non-hyperelliptic and of genus $g\geq 4$, then $\rho : I_2(K_C) \ra
  S^2H^0(C,2K_C)$ is the restriction to $I_2(K_C)$ of the multiplication
  map
  \begin{equation}
    H^0(S, K_S(-2\Delta)) \longrightarrow H^0(S,2K_S) \qquad Q \mapsto Q \cdot \hat{\eta} .
  \end{equation}
\end{theorem}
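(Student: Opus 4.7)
The plan is to verify the identity $\rho(Q) = Q \cdot \hat{\eta}$ in $S^2 H^0(C, 2K_C)$ by evaluating both symmetric bilinear forms (on $H^1(T_C)$) on a spanning family of Schiffer variations, matching the result with the formulas of Theorem \ref{copito}.

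First I would check that the product $Q \cdot \hat{\eta}$ actually lands in $S^2 H^0(C, 2K_C)$. The element $Q \in I_2(K_C)$, viewed as a symmetric section of $H^0(S, K_S)$, vanishes on $\Delta$; symmetry forces the vanishing to order two, so $Q \in H^0(S, K_S(-2\Delta))$. Multiplying by $\hat{\eta} \in H^0(S, K_S(2\Delta))$ produces a section of $2K_S = \pi_1^{*}(2K_C) \otimes \pi_2^{*}(2K_C)$, which by Künneth equals $H^0(2K_C) \otimes H^0(2K_C)$; the symmetries of $Q$ and $\hat{\eta}$ put it in the symmetric subspace. Via Serre duality $H^0(2K_C) \cong H^1(T_C)^\vee$, both $\rho(Q)$ and $Q \cdot \hat{\eta}$ are thus identified with symmetric bilinear forms on $H^1(T_C)$.

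Next I would argue it suffices to match these two forms on cross-pairs $(\xi_u, \xi_v)$ of Schiffer variations supported at distinct points $p \neq q$. Schiffer variations span $H^1(T_C)$: any $\omega \in H^0(2K_C)$ annihilating all of them would vanish identically on $C$. Picking two bases $\{\xi_{u_i}\}_{i=1}^{3g-3}$, $\{\xi_{v_j}\}_{j=1}^{3g-3}$ supported on disjoint finite subsets of $C$, the cross-pairs $\xi_{u_i} \odot \xi_{v_j}$ then span $S^2 H^1(T_C)$, so equality of the two forms on such pairs is enough. For $p \neq q$, factoring $Q(p,q), \hat{\eta}(p,q) \in (K_C)_p \otimes (K_C)_q$ into simple tensors shows that the value at $(p,q)$ of $Q \cdot \hat{\eta}$, evaluated on $u^{\otimes 2} \otimes v^{\otimes 2}$, equals $Q(u,v) \cdot \hat{\eta}(u,v) = Q(u,v) \cdot \eta_p(u)(v)$, where the last equality is the defining relation between $\hat{\eta}$ and $\eta_p$ recorded in the construction. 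Serre duality pairs a quadratic differential $\omega$ with a Schiffer variation $\xi_u$ via a local residue giving a universal constant times $\omega(u,u)$; applying this in each factor yields
\[
(Q \cdot \hat{\eta})(\xi_u \odot \xi_v) \;=\; c \cdot Q(u,v)\, \eta_p(u)(v),
\]
which, up to normalization, is precisely the formula $\rho(Q)(\xi_u \odot \xi_v) = -4\pi i\, \eta_p(u)(v)\, Q(u,v)$ of Theorem \ref{copito}.

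The main obstacle is the bookkeeping of the Serre duality residue and verifying that the scalar $c$ matches the normalization used in Theorem \ref{copito}: the chosen Dolbeault representative of $\xi_u$ (with its factor $\lambda^2$) and the convention for the pairing $H^0(2K_C) \times H^1(T_C) \to \mathbb{C}$ each contribute a constant that must be tracked through a local coordinate computation. Once the constants are calibrated, the diagonal identity $\rho(Q)(\xi_u \odot \xi_u) = -2\pi i\, \mu_2(Q)(u^{\otimes 4})$ follows automatically from the already-established equality in $S^2 H^0(2K_C)$; alternatively it can be checked directly by Taylor-expanding $\hat{\eta}$ to second order along $\Delta$, the pole of order two combining with the zero of order two of $Q$ along $\Delta$ to recover the second Gaussian map $\mu_2$ as the leading symbol.
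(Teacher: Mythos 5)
This theorem is quoted from \cite{cfg} and the present paper gives no proof of it, but your argument --- show $Q\cdot\hat{\eta}$ lands in $S^2H^0(2K_C)$, observe that cross-pairs $\xi_u\odot\xi_v$ of Schiffer variations supported at distinct points span $S^2H^1(T_C)$, and match both forms on such pairs via Theorem \ref{copito} --- is essentially the proof given in \cite{cfg}, and each step (order-two vanishing of a symmetric $Q$ on $\Delta$, spanning of cross-pairs, the residue evaluation $\langle\omega,\xi_u\rangle=c\,\omega(u,u)$) is sound. The one loose end you correctly flag, the universal constant coming from the Serre-duality residue versus the normalization in Theorem \ref{copito}, is absorbed into the ``above identifications'' of the statement and is immaterial for every use of Theorem \ref{prodotto} in this paper (zero loci, ranks, simultaneous diagonalization), so the proposal is complete for all practical purposes once that calibration is carried out.
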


The form $\hat{\eta}$  is very mysterious, locally one has
\begin{equation}
\hat{\eta}= \frac{dz\wedge dw}{(z-w)^2}+ f(z,w)dz\wedge dw \label{2diff}\end{equation} in coordinates near the diagonal and $f(z,w)=f(w,z)$ is smooth. It appears also in an unpublished book of Gunning under the name of "intrinsic double differential of the second kind'' \cite{gun}. 
It seems very hard to give an actual computation of $\hat{\eta}.$ But we have the following geometric argument that gives information.

Take a quadric $Q\in I_2(K_C) \subset S^2H^1(\cO_C)^{\vee}$, $Q: H^1(\cO_C)\times H^1(\cO_C)\to \bC$. 

By the above discussion,  we see the quadric $Q \in I_2(K_C)$  as  a holomorphic section $\tilde{Q}$ of $K_S$ which belongs to $H^0( K_S(-2\Delta) )$, hence it vanishes on the diagonal.

Denote by $\phi_{K_C}: C \to \bP H^1(\cO_C)$ the canonical map. 
Take $\{\omega_1,...,\omega_g\}$ a basis of $H^0(K_C)$, then we have $Q = \sum_{i,j} a_{ij} \omega_i \odot \omega_j$, while $\tilde{Q} = \sum_{ij} a_{ij} \pi_1^*(\omega_i) \wedge \pi_2^*(\omega_j) $ as a section of $H^0(K_S)$.  
Take $ p, q \in C$,  then  we have the identification $T_{(p,q)}S  \cong \pi_1^*T_pC \oplus \pi_2^* T_qC$. So, if we take $u_p \in T_pC$, $u_q \in T_qC$, we have 
$$\tilde{Q}(u_p,u_q) = \sum_{i,j} a_{ij} \omega_i(u_p) \omega_j(u_q), $$which is equal to the value of the bilinear form $Q$ at the points  $\phi_{K_C}(p)$ and  $\phi_{K_C}(q)$.

Since $\tilde{Q}$ vanishes on the diagonal, 
the zero locus  $Z(Q)$ of $\tilde{Q}$  is 
$$\{(p,q) \in S : p\neq q : \tilde{Q}(u_p,u_q)=0\} \cup \Delta.$$
Since $\tilde{Q}(u_p,u_p)=\tilde{Q}(u_q,u_q)=0$,  if $(p,q) \in Z(Q)$, then the symmetric bilinear form $Q$ vanishes at the points $(\phi_{K_C}(p), \phi_{K_C}(q))$, $(\phi_{K_C}(p), \phi_{K_C}(p))$, and $(\phi_{K_C}(q), \phi_{K_C}(q))$.  Hence the line $L(p,q)$ in  $ \bP H^1(\cO_C)$  through $\phi_{K_C}(p)$ and  $\phi_{K_C}(q)$ is contained in $Q$, therefore
$$Z(Q)=\{(p,q) \in S :   Q(x,x)=0, \   \forall x\in L(p,q) \}.$$

Similarly if $Q'=\rho(Q) \in S^2H^0(2K_C) \subset H^0(2K_S)$, set
$$Z(Q')=\{(p,q) \in S  : Q'(p,q)=0\}.$$

By Theorem \ref{prodotto} we have  
$$Z(Q')=Z(Q)\cup Z(\hat{\eta}).$$
Then if $p \neq q$, $(p,q) \in Z(Q)$ implies that $(p,q) \in Z(Q')$,  but this time $Q'(p,p)\neq 0$ in general (the intersection with the diagonal is the divisor of the image of the second Gaussian map).
In particular,  if the quadric $Q$ has rank $4$,  we project form the kernel of $Q$ to $\bP^3$  to get a quadric  $Q_{\bP^3}$ of $\bP^3$ which is  isomorphic to $\bP^1\times \bP^1.$  Call $f: C \rightarrow \bP^1$ the composition of the canonical map with the projection form the kernel of $Q$ and finally with one of the two projections from $\bP^1 \times \bP^1$  to $\bP^1$. For $y\in \bP^1$,  set $f^{-1}(y)=\{P_1\dots P_d\}$. If $y$ is general we get for $i\neq j$, $\tilde{Q}(P_i,P_j)=Q'(P_i,P_j)=0$, but
$Q'(P_i,P_i)\neq 0$. Notice that here (and also later in the paper) we use the simplified notation $\tilde{Q}(P_i,P_j)$ instead of writing $\tilde{Q}(u_i,u_j)$, with $u_i \in T_{P_i}C$, $u_j  \in T_{P_j}C$. This is clearly possible once we choose a local trivialisation of the line bundle $K_S$ in a neighborhood of $(P_i, P_j)$. When there is no ambiguity,  we will also identify $Q$ and $\tilde{Q}$ as in Theorem \ref{prodotto}. 

\begin{theorem}
\label{interpretation}
If $Q$ has rank $4$ the quadric $Q'$ has rank $\geq \deg f$ where $f$ is  the composition of the canonical map with the projection to one of the two factors of $\bP^1\times \bP^1.$
\end{theorem}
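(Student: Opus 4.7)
The plan is to exhibit a $d$-dimensional subspace of $H^1(C,T_C)$, with $d:=\deg f$, on which $Q'=\rho(Q)$ restricts to a non-degenerate quadratic form; since the rank of a symmetric bilinear form is at least the rank of its restriction to any subspace, this forces $\mathrm{rank}(Q')\geq d$. The natural candidates are the Schiffer variations $\xi_{u_i}$ at the points of a generic fibre of $f$, because Theorem~\ref{copito} expresses $\rho(Q)(\xi_{u_i}\odot\xi_{u_j})$ in closed form in terms of $\tilde{Q}$ and $\mu_2(Q)$.

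Concretely, I fix $y\in\bP^1$ outside a finite bad set and write $f^{-1}(y)=\{P_1,\dots,P_d\}$ as a reduced divisor, with $u_i\in T_{P_i}C$ any nonzero tangent vectors. The key geometric observation is that the projected images of the $P_i$ in $\bP^3$ lie on a common line $L_y$ of the ruling defining $f$, and $L_y$ is entirely contained in the smooth quadric $Q_{\bP^3}\cong\bP^1\times\bP^1$. Consequently the bilinear form $Q$ vanishes on $L_y\times L_y$, so $\tilde{Q}(u_i,u_j)=0$ for all $i,j$. Theorem~\ref{copito} then yields
\[
M_{ij}:=\rho(Q)(\xi_{u_i}\odot\xi_{u_j})=
\begin{cases}
-4\pi i\,\eta_{P_i}(u_i)(u_j)\,\tilde{Q}(u_i,u_j)=0 & \text{if } i\neq j,\\
-2\pi i\,\mu_2(Q)(u_i^{\otimes 4}) & \text{if } i=j.
\end{cases}
\]

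For $y$ general, the $P_i$ avoid the finite zero divisor of $\mu_2(Q)\in H^0(4K_C)$, so every diagonal entry $M_{ii}$ is nonzero. Schiffer variations at distinct points are linearly independent (by Serre duality this amounts to the standard fact that general points impose independent conditions on $H^0(2K_C)$), so $\xi_{u_1},\dots,\xi_{u_d}$ span a $d$-dimensional subspace of $H^1(C,T_C)$ on which $Q'$ is diagonal with nonzero diagonal entries, giving $\mathrm{rank}(Q')\geq d=\deg f$.

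The main obstacle is guaranteeing $\mu_2(Q)\not\equiv 0$ on $C$, for otherwise the diagonal collapses and the argument yields no lower bound. A conceptual way to handle this uses Theorem~\ref{prodotto}: writing $Q'=Q\cdot\hat{\eta}$ and recalling from \eqref{2diff} that $\hat{\eta}$ has a double pole along $\Delta$ while $Q$ vanishes to order exactly two along $\Delta$, the restriction $Q'|_\Delta$ identifies up to a nonzero constant with $\mu_2(Q)\in H^0(4K_C)$, and non-vanishing of $\mu_2$ on a rank $4$ quadric in $I_2(K_C)$ reduces to a local computation along the diagonal.
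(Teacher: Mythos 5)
Your proof is correct and follows essentially the same route as the paper's: restrict $Q'=\rho(Q)$ to the span of the Schiffer variations at a general fibre of $f$, note that the off-diagonal entries vanish because the fibre lies in a linear space contained in the quadric (a ruling line of $Q_{\bP^3}$), and that the diagonal entries are nonzero multiples of $\mu_2(Q)$ evaluated at the $P_i$. The one point you rightly flag, namely $\mu_2(Q)\not\equiv 0$, is settled most cleanly not by a local computation along the diagonal but by the factorization $\mu_2(Q)=\mu_{1,L}(s_0\wedge s_1)\cdot \mu_{1,M}(t_0\wedge t_1)$ of the second Gaussian map of a rank $4$ quadric into two Wronskians of linearly independent sections (the identity used later in the proof of Theorem \ref{diago}), each of which is a nonzero holomorphic section.
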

\proof

Consider the linear subspace $V = \langle \xi_{P_1}, ..., \xi_{P_d} \rangle \subset H^1(T_C)$. It is easy to see that $\dim V = d$ (see e.g. the proof of Theorem 4.1 of \cite{cfg}). The quadric $Q'$ belongs to $S^2 H^1(T_C)^{\vee}$ and the image of the points $P_i$ via the bicanonical map are the points $[\xi_{P_i}] \in  \bP(H^1(T_C))$. Hence the above argument shows that the matrix associated to the restriction of $Q' = \rho(Q)$  to $V$ in the basis  given by the Schiffer variations at the points $P_i$ is diagonal with non zero diagonal entries. 
\qed\\

Recall that quadrics $Q \in I_2(K_C)$ of rank 4 correspond to $\{L, K_C(-L), U, W\}$ where $L$ is a line bundle on $C$, $U \subset H^0(C,L)$ and $W \subset H^0(K_C(-L))$ are 2-dimensional subspaces. The quadric $Q$ has rank 3 if and only if $2L = K_C$ and $U =W$. In the case $rank(Q) =4$,  if $U = \langle s_0,s_1\rangle $ and $W = \langle t_0, t_1\rangle$, 
$Q= s_0t_0\odot s_1t_1 - s_0t_1 \odot s_1 t_0. $
So, if we denote by $f,g:C \rightarrow \bP^1$ the two maps given by the two pencils $U$ and $W$, we see that the zero locus of $Q$ in $S$ is given by
$$\{(p,q) \in S \ | \  f(p) = f(q)\} \cup \{(p,q) \in S \ | \  g(p) = g(q)\} \cup ( (B_f \cup B_g) \times   C) \cup  (C \times (B_f \cup B_g)), $$
where $B_f$ and $B_g$ are the base loci of the pencils $f$ and $g$. 

If $rank(Q) =3$ and $U=W = \langle s_0,s_1\rangle$, then $Q = s_0^2 \odot s_1^2 - s_0s_1 \odot s_0s_1$. If we denote by $h:C \rightarrow \bP^1$ the map given by the pencil $U=W$, the zero locus of $Q$ in $S$ is $2R_h  +2F$ where $F =  (B_h \times C) \cup (C \times B_h)$ with $B_h$ the base locus of $h$ and  
$$R_h := \{(p,q) \in S \ | \  h(p) = h(q)\}.$$

Now we give another application. 

\begin{proposition}
\label{rankQ'}
If $Q$ is non zero, then the rank of $Q'$ is $\geq 2$. 
If  rank of $Q=3 $ then the rank of $Q'$ is   $\geq 3.$
\end{proposition}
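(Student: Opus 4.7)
The plan splits naturally into two parts. For the lower bound of $2$, the key observation is that the divisor class of $\tilde Q$ in $|K_S-2\Delta|$ is incompatible with the horizontal/vertical structure that a rank-one $Q'$ would force. For the improvement to $3$ in the rank-$3$ case, I localize on a generic fiber of the pencil associated to $Q$ and diagonalize $Q'$ against the corresponding Schiffer variations.

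For part one, I start from Theorem \ref{prodotto}: $\rho$ is multiplication by the nonzero section $\hat\eta$, so it is injective, and $Q\neq 0$ forces $Q'\neq 0$, i.e.\ $\mathrm{rank}\,Q'\geq 1$. Suppose for contradiction that $\mathrm{rank}\,Q'=1$, so $Q'=\alpha\otimes\alpha$ for some $\alpha\in H^0(2K_C)$; its divisor on $S$ is then $\pi_1^*D+\pi_2^*D$ with $D=\mathrm{div}(\alpha)$, purely horizontal and vertical. Decomposing $\mathrm{div}(Q')=A+B$ where $A$ is the divisor of $\hat\eta$ as a section of $K_S(2\Delta)$ and $B$ is the divisor of $\tilde Q$ as a section of $K_S(-2\Delta)$ (both effective), effectivity forces $B$ to be purely horizontal and vertical as well. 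The swap-invariance of $B$ (from $Q\in S^2 H^0(K_C)$) then gives $B=D_B\times C+C\times D_B$ for some effective $D_B$ on $C$, and the bidegree $(2g-4,2g-4)$ of $B$ (read off from its class $K_S-2\Delta$) forces $\deg D_B=2g-4$, whence $B^2=2(2g-4)^2=8(g-2)^2$. On the other hand,
\[
(K_S-2\Delta)^2=K_S^2-4K_S\cdot\Delta+4\Delta^2=8(g-1)^2-16(g-1)-8(g-1)=8(g-1)(g-4).
\]
Equating $8(g-2)^2=8(g-1)(g-4)$ gives $g=0$, contradicting $g\geq 4$.

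For part two, I write $Q=s_0^2\odot s_1^2-s_0s_1\odot s_0s_1$ with $s_0,s_1\in H^0(L)$ and $L^2=K_C$. The pencil $\langle s_0,s_1\rangle$ defines $h:C\to\bP^1$ of degree $d=g-1-|B_h|$, and since $C$ is non-hyperelliptic with $g\geq 4$, $d\geq 3$. For a generic $y\in\bP^1$ I write $h^{-1}(y)=\{P_1,\ldots,P_d\}$. For $i\neq j$ the pair $(P_i,P_j)$ lies in $R_h\subset Z(\tilde Q)$, so Theorem \ref{prodotto} gives $Q'(\xi_{P_i},\xi_{P_j})=0$. For the diagonal entries Theorem \ref{copito} gives $Q'(\xi_{P_i},\xi_{P_i})=-2\pi i\,\mu_2(Q)(u_i^{\otimes 4})$; a Taylor expansion around $\Delta$ of $\tilde Q=\frac{1}{2}(s_0\boxtimes s_1-s_1\boxtimes s_0)^2$, combined with the polar part $\hat\eta\sim(z-w)^{-2}$, yields $\mu_2(Q)=cW^2$ with $c\neq 0$ and $W=s_0\,ds_1-s_1\,ds_0\in H^0(2K_C)$ the Wronskian of the pencil. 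Linear independence of $s_0,s_1$ gives $W\not\equiv 0$, so for generic $y$ the points $P_i$ avoid the zeros of $W$ and every diagonal entry is nonzero. As in the proof of Theorem \ref{interpretation}, $\xi_{P_1},\ldots,\xi_{P_d}$ are linearly independent in $H^1(T_C)$; hence the restriction of $Q'$ to their span is diagonal with $d\geq 3$ nonzero entries, forcing $\mathrm{rank}\,Q'\geq 3$.

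The main obstacle is the numerical computation in part one. The statement must hold for $Q$ of any rank, so one cannot invoke Theorem \ref{interpretation} directly; the clean way out is the bilinear-algebra discrepancy between the self-intersection of a purely horizontal/vertical symmetric divisor of bidegree $(2g-4,2g-4)$ and that of the class $K_S-2\Delta$.
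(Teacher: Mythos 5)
Your proof is correct, but both halves run along genuinely different lines from the paper's argument. For the bound $\mathrm{rank}\,Q'\geq 2$ the paper argues qualitatively: if $Q'=X^2$ then $Z(Q')$ is a union of coordinate curves, each of which meets the diagonal, whereas $Z(Q')\supseteq Z(\hat\eta)$ and $Z(\hat\eta)$ is a nonempty divisor disjoint from $\Delta$ (this is where the polar part of $\hat\eta$ enters), hence contains no coordinate curve. You instead isolate $B=\mathrm{div}_{K_S(-2\Delta)}(\tilde Q)$ inside $\mathrm{div}(Q')=A+B$, force it to be horizontal/vertical and swap-invariant, and derive the contradiction numerically from $8(g-2)^2\neq 8(g-1)(g-4)$; the intersection numbers ($K_S\cdot\Delta=4g-4$, $\Delta^2=2-2g$) check out, and it is worth noting that your argument uses nothing about $\hat\eta$ beyond effectivity of its divisor. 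For the rank-$3$ case the paper again reasons on divisors: $Z(Q)$ contains $R_h$ with multiplicity two, while for $Q'=XY$ the non-coordinate part $A$ of $Z(Q')$ is reduced, contradicting $2R_h+Z(\hat\eta)=A$. You instead transplant the diagonalization of Theorem \ref{interpretation} to the rank-$3$ quadric, evaluating $Q'$ on the Schiffer variations over a general fiber of $h$ and using $\mu_2(Q)=cW^2\not\equiv 0$ to produce $d=g-1-\deg B_h\geq 3$ simultaneous nonzero diagonal entries; this in fact yields the sharper conclusion $\mathrm{rank}\,Q'\geq g-1-\deg B_h$, whereas the paper's contradiction argument only excludes rank $2$. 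Both routes are sound; yours is more quantitative, the paper's more economical.
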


\begin{proof} First we show that the rank of $Q'$ is $>1$ for any non zero $Q.$  In fact if $Q'=X^2$ we get 
$$Z(Q')=\{(P,Q)\in S \ | \  X(Q)X(P)=0\}=\{(P,Q) \ | \ X(P)=0\}\cup \{(P,Q) \  | \  X(Q)=0\}, $$ that is a union of coordinate curves of the type
$\{P\}\times C \cup C\times \{P\}$, so all the components meet  the diagonal. On the other hand,  we get that
$Z(Q')=Z(\hat{\eta})\cup Z(Q)$ but $Z(\hat{\eta})$ does not intersect the diagonal, as one can easily see from the definition of $\hat{\eta}$, hence $Z(\hat{\eta})$ does not contain any coordinate curve.
Now assume $Q'=XY$, so that  $$Z(Q')=\{(P,Q)\in S \ | \  X(Q)Y(P)+X(P)Y(Q)=0\}$$
That is, if $P$ and $Q$ are not in the base locus of the pencil,  
$$\frac{X(Q)}{Y(Q)}=-\frac{X(P)}{Y(P)}.$$

Now if  $f:C\to \bP^1$ is the map defined by   the sections $X$ and $Y$ of $H^0(2K_C)$, we get
$ Z(Q')= \{(P,Q)\in S: f(P)=-f(Q)\}\cup B=A\cup B$ where $B$ is union of coordinate curves defined by the base locus of the pencil. Let $e$ be the degree of $f.$ For general $t\in \bP^1$ then $f^{-1}(t)=\{P_1,\dots,P_e\}$ and
 $f^{-1}(-t)=\{Q_1,\dots,Q_e\}$ are distinct. It follows that $A$ has not multiple components.
 Now assume that the rank of $Q$ is $3$.  In this case we have 
 $Z(Q)= 2R_h +2F$  as above. The equation $ 2R_h+Z(\hat{\eta})+2F= A+B$ gives $2R_h+Z(\hat{\eta})=A$ and then  we get a contradiction, since
$A$ has not multiple components. 
\end{proof}

Theorem \ref{interpretation} gives a geometric interpretation and easier proof of Theorem 4.1 of \cite{cfg}. This was the main ingredient used in \cite{cfg} to give a bound on the the maximal dimension of a germ of a totally geodesic submanifold generically contained in the Torelli locus. We recall the main results of \cite{cfg}. 
\begin{theorem} [\cite{cfg}]

\label{stima1}
  Assume that $C$ is a $k$-gonal curve of genus $g$ with $g\geq 4$ and
  $k\geq 3$. Let $Y$ be a germ of a totally geodesic submanifold of
  ${\mathsf A}_g$ which is contained in the jacobian locus and passes through
  $j([C])$. Then $\dim Y \leq 2g+k - 4$.

\end{theorem}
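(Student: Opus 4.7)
My plan is to combine the totally geodesic hypothesis, which forces $T_Y \subset H^1(C,T_C)$ to be isotropic for every quadratic form $\rho(Q)$ with $Q\in I_2(K_C)$, with Theorem~\ref{interpretation} to produce a single $Q$ whose image $\rho(Q)$ has rank large enough to force the claimed codimension.

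Since $Y$ is totally geodesic in $\A_g$ and contained in $\Tg$, and the Torelli map is an immersion at $[C]$ (a $k$-gonal curve with $k\geq 3$ is non-hyperelliptic), the second fundamental form vanishes on $S^2 T_Y$, equivalently
$$\rho(Q)|_{T_Y\times T_Y} \;=\; 0 \qquad \text{for every } Q \in I_2(K_C).$$
A symmetric bilinear form of rank $r$ on a space of dimension $n$ admits isotropic subspaces of dimension at most $n - \lceil r/2\rceil$. Taking $n = 3g-3 = \dim H^1(C,T_C)$, the bound $\dim T_Y \leq 2g+k-4$ will follow as soon as one produces $Q\in I_2(K_C)$ with $\mathrm{rank}\,\rho(Q)\geq 2g-2-k$, since then
$$\dim T_Y \;\leq\; (3g-3) - \Bigl\lceil \tfrac{2g-2-k}{2}\Bigr\rceil \;\leq\; 2g+k-4,$$
the last inequality being an elementary check using that $g-k+1\leq\lceil(2g-2-k)/2\rceil$ for all $k\geq 3$.

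Such a $Q$ is built directly from the $g^1_k$. Let $L$ be the gonality pencil, $H^0(L)=\langle s_0,s_1\rangle$. Riemann-Roch gives $h^0(K_C-L)=g-k+1\geq 2$, since $k\leq\lfloor(g+3)/2\rfloor\leq g-1$ for $g\geq 4$. Pick a generic 2-dimensional subspace $W=\langle t_0,t_1\rangle\subset H^0(K_C-L)$ which is base-point-free as a pencil in $|K_C-L|$, so that the associated map $h:C\to\bP^1$ has degree $\deg(K_C-L)=2g-2-k$. Then
$$Q \;=\; s_0 t_0 \odot s_1 t_1 \;-\; s_0 t_1 \odot s_1 t_0 \;\in\; I_2(K_C)$$
is the rank 4 quadric associated to the pair $(L,W)$, and Theorem~\ref{interpretation} applied with $f=h$ (the projection onto the $W$-factor of $\bP^1\times\bP^1$) yields $\mathrm{rank}\,\rho(Q)\geq\deg h=2g-2-k$, completing the argument.

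The main obstacle is verifying that a generic pencil $W\subset H^0(K_C-L)$ is base-point-free, so that $\deg h$ attains the desired value $2g-2-k$; this amounts to ruling out that $|K_C-L|$ is composed with a pencil, which must be handled using the projective geometry of the canonical image for the specific range of $(g,k)$. A minor additional point is to ensure that $Q$ has genuine rank 4, which for generic $W$ is automatic since $|L|$ and $W$ define distinct pencils.
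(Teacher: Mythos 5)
Your proposal follows essentially the same route as the paper (and as the proof in \cite{cfg} that the paper recalls right after the statement): build a rank~$4$ quadric $Q=s_0t_0\odot s_1t_1-s_0t_1\odot s_1t_0$ from the gonality pencil $U=H^0(L)$ and a pencil $W\subset H^0(K_C-L)$, bound $\mathrm{rank}\,\rho(Q)$ from below by the degree of one of the two rulings via Theorem~\ref{interpretation}, and conclude with the maximal-isotropic-subspace count in $H^1(T_C)$. The isotropy of $T_Y$ and the arithmetic at the end are correct; note that the dimension count only needs $\mathrm{rank}\,\rho(Q)\geq 2g-2k+1$ (which is what \cite{cfg} establishes, and which already yields $\dim T_Y\leq 3g-3-(g-k+1)=2g+k-4$), so your target $2g-2-k$ is stronger than necessary.

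The step you flag and do not close is a genuine gap: a generic pencil $W\subset H^0(K_C-L)$ is base-point-free only if the full system $|K_C-L|$ is, and this can fail --- a base point $P$ of $|K_C-L|$ exists exactly when $h^0(L(P))=3$, as happens for instance on smooth plane curves. The gap is closable along the lines of the Proposition in Section~3: a Clifford-index computation shows the base locus of $|K_C-L|$ has degree at most $1$, so the corresponding ruling still has degree at least $2g-3-k$, and $2g-3-k\geq 2g-2k+1$ as soon as $k\geq 4$, which is all the isotropy count requires; for $k=3$ a base point would produce a $g^2_4$, forcing $g\leq 3$ or hyperellipticity, so in that case $|K_C-L|$ is base-point-free and the degree is the full $2g-2-k$. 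One must also treat the boundary case $h^0(K_C-L)=2$ (i.e.\ $k=g-1$, possible only for $g\leq 5$), where there is no generic choice of $W$; there $|K_C-L|$ is itself a base-point-free pencil, since a base point would lower the gonality. With these cases supplied your argument is complete and reproduces the paper's proof.
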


From this, using that the gonality is at most  $ [(g+3)/{2}]$, they get the following 

\begin{theorem}  [\cite{cfg}]
  \label{stima2}
  If $g\geq 4$ and $Y$ is a germ of a totally geodesic submanifold of
  $A_g$ contained in the jacobian locus, then $\dim Y \leq
  \frac{5}{2}(g-1)$.
\end{theorem}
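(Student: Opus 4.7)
The plan is to derive Theorem \ref{stima2} as a direct consequence of Theorem \ref{stima1}, combined with the classical Brill--Noether upper bound on the gonality of a smooth curve of genus $g$. Recall that Brill--Noether existence guarantees a pencil $g^1_d$ on every smooth curve of genus $g$ whenever the Brill--Noether number $\rho(g,1,d)=2d-g-2\geq 0$, so the gonality $k$ of any smooth curve of genus $g$ satisfies $k \leq \lfloor (g+3)/2 \rfloor$.

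First I would reduce to the case where the germ $Y$ meets the open non-hyperelliptic part of $j(\mathsf{M}_g)$. Since $Y$ is a germ of a totally geodesic submanifold of $\mathsf{A}_g$ contained in the Jacobian locus $\mathsf{T}_g$, there are two possibilities: either $Y$ intersects $j(\mathsf{M}_g \setminus \mathsf{HE}_g)$, or $Y$ lies inside the closure of the hyperelliptic Torelli locus. In the second case, since the hyperelliptic locus has dimension $2g-1$, one has $\dim Y \leq 2g-1$, and an elementary check yields $2g-1 \leq \tfrac{5}{2}(g-1)$ for every $g\geq 3$, so the conclusion holds automatically. In the first case, after shrinking $Y$ we may select a point $y = j([C]) \in Y$ with $C$ smooth and non-hyperelliptic of some gonality $k \geq 3$, so that the hypotheses of Theorem \ref{stima1} are satisfied at $y$; the theorem then gives
\begin{equation*}
\dim Y \leq 2g+k-4.
\end{equation*}

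It remains to insert the Brill--Noether bound and split according to the parity of $g$. When $g$ is odd, $k \leq (g+3)/2$, so
\begin{equation*}
\dim Y \leq 2g+\tfrac{g+3}{2}-4 = \tfrac{5g-5}{2} = \tfrac{5}{2}(g-1).
\end{equation*}
When $g$ is even, $k \leq (g+2)/2$, so
\begin{equation*}
\dim Y \leq 2g+\tfrac{g+2}{2}-4 = \tfrac{5g-6}{2} < \tfrac{5}{2}(g-1).
\end{equation*}
In both cases the required inequality follows. The only non-trivial ingredient is Theorem \ref{stima1} itself, which rests on the rank-$4$ quadric techniques developed in this section; the content of the present statement is the routine combinatorial reduction outlined above, in which no real obstacle is expected.
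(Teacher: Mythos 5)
Your proposal is correct and follows essentially the same route as the paper: Theorem \ref{stima1} combined with the Brill--Noether gonality bound $k\leq \lfloor (g+3)/2\rfloor$, which the paper states in one line after Theorem \ref{stima1}. Your explicit treatment of the hyperelliptic case (where $k\geq 3$ fails) is a small extra precaution the paper leaves implicit, and your arithmetic in both parity cases checks out.
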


The strategy used in \cite{cfg} to prove Theorem \ref{stima1} is to construct  a rank 4 quadric $Q \in I_2(K_C)$ such that the quadric $\rho(Q) \in S^2H^0(2K_C) \cong S^2H^1(T_C)^{\vee}$ has rank at least $2g-2k-1$. This was done in \cite[Thm 4.1]{ cfg} and now follows from Theorem \ref{interpretation}. Then the proof of Theorem  \ref{stima1} follows by observing that the tangent space to a totally geodesic submanifold contained in the Torelli locus and passing through $j(C)$ is a linear subspace of $H^1(T_C)$ which is isotropic with respect to $\rho(Q)$, hence its dimension is at most $3g-3 - (2g-2k-1) + \frac{(2g-2k-1)}{2}=
2g+ k - \frac{7}{2}$.

\section{Gonality}
Let $C$ be a curve of genus $g>5$. Assume that $C$ is not hyperelliptic, and denote by $k$ its gonality. We have $2<k<g-1$.
Let $L$ be  a line bundle  on $C$ of degree $k$  and such that $h^0(L)=2.$ Set $M:=K_C-L$, where $K_C$ is the canonical bundle.
From Riemann Roch we get $h^0(M)=2g-2-k+2-(g-1)=g+1-k>2.$
The Clifford index  of $C$ is either $k-2$ (computed by $L$) or $k-3$.
We have the following

\begin{proposition} Let $B$ be the base locus of $M=K_C-L.$ Then
 \begin{enumerate}  \item $B$ is either empty or $B=P$ is a point. 
 \item  If $B=P$ is a point, then  $h^0(L(P))=3,$ the map 
$f: C\to |L(P)|$ is birational onto its image which is a smooth curve.
\item If $B=\emptyset,$  and $D>0$ is a divisor  such that $h^0(M(-D))=h^0(M)-1, $ then   $\deg D\leq 3.$ 
\item If $B=\emptyset,$ and $D, E>0$  are divisors such that $h^0(M(-D))= h^0(M(-E)) = h^0(M)-1,$ $\deg D = \deg(E) =3,$ then $D=E.$
In particular  the map $g: C\to |M|$ is either birational on its image or has degree $ 2.$
\end{enumerate}
 \end{proposition}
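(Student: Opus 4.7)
The unifying tool is Riemann--Roch. For an effective divisor $D$, if $h^0(M(-D))=h^0(M)-\varepsilon$ with $\varepsilon\in\{0,1\}$, then Serre duality and Riemann--Roch on $M(-D)$ give $h^0(L+D)=\deg D+2-\varepsilon$ and $h^1(L+D)=g+1-k-\varepsilon$. In both situations relevant to the proposition, $h^0(L+D)\geq 2$ and $h^1(L+D)\geq 2$ (using $k\leq g-2$), so $L+D$ contributes to the Clifford index, and the excerpt's statement that $\mathrm{Cliff}(C)\in\{k-2,k-3\}$ provides the contradiction engine.

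For part (1), I take any $B'\leq B$ effective of degree $2$; then $h^0(M-B')=h^0(M)$ (case $\varepsilon=0$), so $h^0(L+B')=4$, and $\mathrm{Cliff}(L+B')=(k+2)-2\cdot 3=k-4<k-3$, contradicting the Clifford bound on $C$. For part (3), the case $\varepsilon=1$ and $\deg D\geq 4$ give $\mathrm{Cliff}(L+D)=k-\deg D\leq k-4$, again contradicting $\mathrm{Cliff}(C)\geq k-3$.

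For part (2), with $B=P$, the same computation gives $h^0(L+P)=3$. The pencil $|L+P|$ is base-point-free: a base point $Q\neq P$ would give $h^0(L+P-Q)=3$ of degree $k$, hence $\mathrm{Cliff}=k-4$ (the case $Q=P$ is automatic since $h^0(L)=2=h^0(L+P)-1$). Once the morphism $f:C\to\mathbb{P}^2$ is defined, it is birational: if $\deg f=e\geq 2$, then a generic pencil of lines through a smooth point of $f(C)$ pulls back to a pencil on $C$ of mobile degree $(k+1)-e<k$, contradicting the gonality. The image is smooth: a pair of distinct points $P_1\neq P_2$ with $f(P_1)=f(P_2)$ forces $h^0(L+P-P_1-P_2)=h^0(L+P-P_1)=2$, and a ramification point $P_1$ forces $h^0(L+P-2P_1)=2$; either yields a $g^1_{k-1}$, contradicting the gonality.

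For part (4), I interpret the hypothesis as: the image $g(D)$ under the morphism $g:C\to\mathbb{P}^{g-k}$ defined by $|M|$ is a single point of $\mathbb{P}^{g-k}$. Suppose $D\neq E$ both impose one condition and both have degree $3$. If $g(D)=g(E)$, then $H^0(M-D)=H^0(M-E)=H^0(M-(D\vee E))$ because all three spaces consist of sections whose hyperplane passes through the common image point, so $D\vee E$ also imposes a single condition; since $\deg(D\vee E)\geq 4$, this contradicts part (3). If $g(D)\neq g(E)$, then the two fibers of $g$ above these points have length $\geq 3$, and by part (3) length exactly $3$, forcing $\deg g=3$. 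The main obstacle is ruling out $\deg g=3$: I observe that $M$ is then the pullback of $\mathcal{O}_{C'}(1)$ from the image $C':=g(C)\subset\mathbb{P}^{g-k}$, so $\deg C'=(2g-2-k)/3$; non-degeneracy of $C'$ forces $\deg C'\geq g-k$, i.e., $k\geq(g+2)/2$, and combining with the integrality of $(2g-2-k)/3$ and with $k\leq\lfloor(g+3)/2\rfloor$ leaves only $g$ even and $k=g/2+1$. In that boundary case $\deg C'=g-k$, so $C'$ is a rational normal curve; then $C\to C'\cong\mathbb{P}^1$ is a $3$-sheeted cover, making $C$ trigonal, hence $k=3$, contradicting $k=g/2+1\geq 4$ (valid since $g\geq 6$). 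This yields $D=E$, and the ``in particular'' part follows because $\deg g\geq 3$ would furnish distinct degree-$3$ fibers as candidates for $D\neq E$.
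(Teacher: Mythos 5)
Parts (1)--(3) of your argument are correct and run on the same engine as the paper's proof (Riemann--Roch together with the fact that the Clifford index of $C$ is at least $k-3$); your part (2) is essentially the paper's projection-from-a-point argument split into separate base-point-freeness, birationality and smoothness checks, all of which are sound.

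The gap is in part (4), in the case $g(D)\neq g(E)$. From the fact that the two scheme-theoretic fibres of $\phi_M$ over $g(D)$ and $g(E)$ have length exactly $3$ you conclude that $\deg g=3$. This inference is invalid: for a finite morphism from a smooth curve onto a possibly singular image, special fibres can be strictly longer than the generic one, so two fibres of length $3$ are compatible with $\deg g=1$ or $2$. Concretely, nothing in your argument excludes $g$ being birational onto a curve $C'\subset\bP^{g-k}$ with two points of multiplicity $3$ (or $\deg g =2$ with two points of $C'$ over which the fibre has length $3$); in either scenario one has two disjoint degree-$3$ divisors $D\neq E$, each imposing a single condition on $|M|$, and your case analysis produces no contradiction. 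Your elimination of $\deg g=3$ via the degree of the image and the rational normal curve is correct as far as it goes, but it disposes of only one of the scenarios allowed by your hypotheses. The paper closes precisely this situation by a different mechanism: after showing $D\cap E=\emptyset$ (automatic in your case (b)), it uses the sequence $0\to L\to L(D)\oplus L(E)\to L(D+E)\to 0$ to get $h^0(L(D+E))\geq h^0(L(D))+h^0(L(E))-h^0(L)=6$, so that $L(D+E)$ has Clifford index at most $k+6-10=k-4<k-3$, a contradiction. Some argument of this kind, rather than a degree count on $g$, is needed to finish case (b).
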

\begin{proof} If $B\neq \emptyset$ is the base locus of  $M$, we have that $h^0(M(-B))=h^0(M).$
Then by Riemann Roch,  $h^0(L(B))=2+\deg B$, the Clifford index of $L(B)$ is  $k+\deg B-2(1+\deg B)=k-2-\deg B\geq k-3$,  hence $B=P$ is a point, and $h^0(L(P))=3.$ Consider the map  $f:C\to |L(P)|$.  It is birational onto its image which is smooth. In fact,  if we set $s=\deg f(C)$ and $d = deg(f)$,  we have $sd=k+1$. Considering the composition of $f$ with the projection from a point $q\in f(C)$ of multiplicity $m_q$, we get a map $C\to \bP^1$ of degree
$d(s-m_q)=k+1-dm_ q\leq k$. Then  $dm_q= 1$, $d=1$, that is $f$ birational,  and for any $q\in f(C)$, $m_q=1$,  hence $q$ is a smooth point.

\bigskip Assume now that $B=\emptyset.$ Let $D>0$ be a divisor of degree $s$ such that $h^0(M)=1+h^0(M(-D))$. Then again from Riemann Roch we have $h^0(L(D))=2+s-1=s+1$, 
$\deg(L(D))=k+s$. Hence its  Clifford index is  $k+s-2s=k-s\geq k-3,$  so $s\leq 3.$ 

Assume that there are two divisors
$D$ and $E$ of degree $3$ such that $h^0(M(-D))=h^0(M(-E))=h^0(M)-1.$
If   $F= D\cap  E$ (this is the MCD of the divisors) we would like to show that $F=\emptyset.$ In fact, set $G=D+E-F$  and consider the exact sequence:
$$0\to M(-G)\to  M(-D)\oplus M(-E)\to M(-F) \to 0.$$
If by contradiction we assume $F\neq \emptyset$, we have $h^0(M-E) \leq h^0(M-F)<h^0(M)$ then $ h^0(M-F)=h^0(M)-1.$ It follows that $h^0(M-G)\geq 2(h^0(M)-1)-(h^0(M)-1)=h^0(M)-1$, 
but  $\deg G>3$ and we find a contradiction.  So we have $F=\emptyset$. Consider the exact sequence:
$$0\to L\to  L(D)\oplus L(E)\to L(D+E)\to 0$$
that in cohomology gives
$0\to H^0(L)\to H^0(L(D))\oplus H^0(L(E))\to H^0(L(D+E)).$ Since $h^0(L(D))=h^0(L(E))=4$,  we obtain
 $H^0(L(D+E))\geq 6$ and its Clifford index $\leq k+6-10= k-4< k-3$, so we get a contradiction.

\end{proof}
From this we immediately get the following
\begin{corollary} Let $C$ be a projective curve
of genus $g$, gonality $k=\deg L$ where $h^0(L)=2$. Then if $C$ is not a smooth plane curve and has no involutions, the linear system $|M|=|K_C-L|$ is base point free and $C\to |M|$ is birational onto its image.\end{corollary}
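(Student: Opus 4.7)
The plan is to read off the corollary almost directly from the preceding proposition, using each of the two exceptional hypotheses ("not a smooth plane curve" and "no involutions") to rule out one of the two alternatives it allows.

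First I would address base point freeness of $|M|$. Part (1) of the proposition already forces the base locus $B$ to be either empty or a single point $P$. Suppose for contradiction that $B=P$. Then by part (2), $h^0(L(P))=3$ and the associated map $f: C \to |L(P)| \subset \bP^2$ is birational onto its image, which is a smooth plane curve. Since a birational morphism between smooth projective curves is an isomorphism, $C$ itself is a smooth plane curve, contradicting the hypothesis. Hence $B=\emptyset$ and $|M|$ is base point free.

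Now that $|M|$ is base point free, I would invoke part (4) of the proposition, which states that the morphism $g: C \to |M|$ is either birational onto its image or has degree $2$. The second case is excluded by the hypothesis that $C$ has no involutions: a degree $2$ morphism from the smooth curve $C$ to a (possibly singular) curve $g(C)$ factors through the normalisation $\widetilde{g(C)}$, and the induced field extension $k(C)/k(\widetilde{g(C)})$ is quadratic, hence automatically Galois in characteristic zero; the nontrivial deck transformation is then a non-identity involution of $C$. This contradicts the assumption, leaving only the birational alternative.

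The main obstacle is essentially non-existent: the substantive work has been carried out in the preceding proposition (in particular the Clifford-index bookkeeping that pins the degree of $D$ to at most $3$ and then forces uniqueness in degree $3$). What one must double-check is only the standard fact that a degree $2$ map from a smooth curve yields an involution; this is immediate from the Galois property of degree $2$ function field extensions, so the corollary follows with no further calculation.
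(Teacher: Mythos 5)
Your proposal is correct and is exactly the argument the paper intends: the corollary is stated as an immediate consequence of the preceding proposition, with ``not a smooth plane curve'' ruling out the case $B=P$ via part (2) (birational onto a smooth plane curve forces $C$ to be one) and ``no involutions'' ruling out the degree-$2$ alternative in part (4). Your filling-in of the two standard facts (a birational morphism of smooth projective curves is an isomorphism; a degree-$2$ map yields an involution via the Galois quadratic function-field extension) is accurate and matches the paper's reasoning.
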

\section{Divisors and  quadrics}

\subsection{The quadrics and the second fundamental form}
Here we follow \cite{cfg}.
Let $C$ be a smooth projective curve, $L$ a line bundle on $C$ and $M=K_C(-L)$. 
In practice we will consider the case where $L$ computes the gonality, but for the moment we only assume 
\begin{enumerate}
\item $h^0(L)\geq 2,$ 
\item $h^0(M)=r+1\geq 3,$   
\item $M$ base point free,
\item $f: C\to |M|=\bP^r $ is birational onto its image.
\end{enumerate}
Set  $\deg M=d$ and  fix now and for all two indipendent  sections $x,y$ of $L$.  Let $\omega=\mu_{1,L}(x\wedge y)\in H^0(K_C(L^2))$ be the image of the first gaussian (or Wahl) map. The zero divisor $Z$ of $\omega$ is $Z=B+2F$ where $B$ is the branch divisor of $h=x/y$ and $F$ is the fixed divisor (that  in the application  will be empty).
For any global section $s$ of $M$ we consider the associated divisor $D(s)\in |M|.$ 
We fix  a general section $t$ 
such that:
\begin{enumerate}
\item $D(t)\cap Z=\emptyset.$
\item $D(t)=p_1+\dots +p_d$, $p_i\neq p_j$ if $i\neq j.$
\item The points $p_i$ are in general linear position: for any group of or $r$ distinct points $p_{i_1},\dots p_{i_j},\dots p_{i_r},$
we have $H^0(M-(p_{i_1}+\dots  +p_{i_r}))=\langle t \rangle$.
\end{enumerate}
The last condition follows for instance  from the uniform lemma of Castelnuovo (see e.g.  \cite[Ch.3]{acgh}) since $f:C\to \bP^r$ is birational onto its image.

Consider the exact sequence induced by $t$
$$0\to \cO_C\stackrel{t}\to M\to M_D\to 0.$$
We  get  $$M_D=\sum M_{p_i}\cong \sum\bC_{p_i},$$
where the last isomorphism follows from the choice of local trivializations of $M.$
Let $W\subset H^0(M)$ be complementary to $t:$ $H^0(M)=\langle t \rangle \oplus W$, so that $\dim W=r$. 
Consider the induced injection $j: W\to H^0(\oplus \bC_{p_i})=\bC^d.$ 
 We can rewrite the linear uniform condition.
For any $s\in W$, $s\neq0$,  then the vector 
$j(s)=(a_1,\dots,a_i,\dots a_d)$
has at most $r-1$ coordinates that are zero.

Let $I_2(K_C)\subset S^2 H^0(K_C)$  be the vector space of quadrics that contain the canonical image of $C$. 
For any $s\in W$,  set  $\omega_1=xt ,\omega_2=yt, \omega_3=xs$ and $\omega_4=ys$. We define the rank $4$ quadric $Q_s\in I_2$ by
$$Q_s=\omega_1\otimes \omega_4-\omega_2\otimes \omega_3.$$ 
Denote by $\mu_{1,M}: \Lambda^2H^0(M) \rightarrow H^0(K_C\otimes M^2)$ the first Gaussian map of $M$. 
Let $V := \langle \xi_{p_1},..., \xi_{p_d}\rangle \subset H^1(T_C)$ be the subspace generated by the Schiffer variations $\xi_{p_i}$ at the points $p_i$. 
To be more precise, we choose a  coordinate $z_i$ around any point $p_i$ and a trivialization $\sigma$ of $L$  such that
$t_i=z_i\sigma.$ We then let $$\xi_i=\delta (\frac{1}{z_i}\partial/\partial z_i)$$ under the coboundary map $\delta:H^0(T(p_i))\to H^1(T_C).$
Dually we have a surjection $H^0(K^2_C)\to V^\ast.$ 


We  also define 
\begin{equation}
\label{pi}
\pi: W\to S^2V^\ast, \ \pi(s)(v \odot w) = \rho(Q_s)(v \odot w), \ \forall v,w \in V.
\end{equation}

The following fundamental result has been proved  in  
 \cite[Thm. 4.1]{ cfg}, with the variant here that all the quadrics $Q_s,$ $s\in W,$ are taken into account.
It also immediately follows from Theorem \ref{interpretation}. 
\begin{theorem}
\label{diago}
Let $x_1,\dots x_d$ be the basis of $V^\ast$ dual to the basis $\{\xi_{p_1},...,\xi_{p_d}\}$ of $V$ given by the Shiffer variations.
Then $$\pi(s)= \lambda \sum_{i=1}^d a_ix_i^2$$ where $\lambda\neq 0$ is a constant independent of $s$,  
$j(s)=(a_1,\dots,a_d)$ where $ j:W\to \bC^d$ is the evaluation map.
The quadrics $\pi(s)$ are diagonalized  at the same time and  for any  $s\neq 0,$ 
${\rm{rank}} (\pi(s))\geq d-r+1.$
\end{theorem}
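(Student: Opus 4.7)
\emph{Plan of proof.} The strategy is to compute the matrix of $\pi(s)$ in the Schiffer basis $\{\xi_i\}_{i=1}^d$ of $V$ by evaluating $\pi(s)(\xi_i \odot \xi_j)$ for every pair $(i,j)$; we will see that this matrix is diagonal with entries proportional to $a_i=s(p_i)$, and the rank bound then drops out of the uniform position hypothesis. The theorem is thus a quantitative refinement of Theorem \ref{interpretation}: where Theorem \ref{interpretation} treats a single quadric, here we control the whole family $\{Q_s\}_{s\in W}$ simultaneously.

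For the off-diagonal entries ($i\neq j$) I would invoke Theorem \ref{copito}, which gives
\[
\pi(s)(\xi_i\odot\xi_j)=-4\pi i\,\eta_{p_i}(u_i)(u_j)\,Q_s(u_i,u_j),\qquad u_i:=\partial/\partial z_i.
\]
Since $Q_s=xt\odot ys-yt\odot xs$ and $t$ vanishes at every $p_i$, each of the four bilinear terms in $Q_s(u_i,u_j)$ contains a factor $t(p_i)$ or $t(p_j)$; hence $Q_s(u_i,u_j)=0$, and the off-diagonal entries vanish.

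For the diagonal entries ($i=j$) I would exploit Theorem \ref{prodotto}, which identifies $\rho(Q_s)$ with the product $\tilde Q_s\cdot\hat\eta\in H^0(2K_S)$, and expand this product locally near $(p_i,p_i)$. Choosing the coordinate $z_i$ so that $t=z_i\sigma_M$, and writing $x=\tilde x\sigma_L$, $y=\tilde y\sigma_L$, $s=\tilde s\sigma_M$, a direct computation factors
\[
\tilde Q_s=\tfrac12(w-z)^2\,G(z,w)\,H(z,w)\,(dz\otimes dw),
\]
where $G(0,0)$ is the Wronskian of $(\tilde x,\tilde y)$ at $0$, i.e.\ the value at $p_i$ of the Wahl form $\omega=\mu_{1,L}(x\wedge y)=x\,dy-y\,dx$ in the chosen trivialisation, while $H(0,0)=-\tilde s(0)=-a_i$. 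Multiplying by $\hat\eta=\bigl((z-w)^{-2}+\text{hol.}\bigr)\,dz\otimes dw$ and restricting to $\Delta$ at $p_i$ yields, via the Serre-duality pairing with $\xi_i\odot\xi_i$ and the diagonal formula of Theorem \ref{copito} involving $\mu_2$, a diagonal entry proportional to $\omega(p_i)\cdot a_i$. The hypothesis $D(t)\cap Z=\emptyset$ ensures $\omega(p_i)\neq 0$ for every $i$; absorbing $\omega(p_i)$ together with the universal Serre-duality constants into the local trivialisations produces the uniform formula $\pi(s)=\lambda\sum_i a_i x_i^2$ with $\lambda\neq 0$ independent of $s$.

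Since $\pi(s)$ is diagonal with entries $\lambda a_i$, its rank equals $\#\{i:s(p_i)\neq 0\}$. If a nonzero $s\in W$ were to vanish at $r$ of the points, say $p_{i_1},\dots,p_{i_r}$, then $s\in H^0(M-p_{i_1}-\cdots-p_{i_r})=\langle t\rangle$ by the uniform position hypothesis~(3); but $W\cap\langle t\rangle=0$, so $s$ vanishes at most at $r-1$ of the $p_i$, giving $\mathrm{rank}\,\pi(s)\geq d-r+1$. The main technical hurdle is the diagonal computation: one must isolate the leading term of $\tilde Q_s$ along $\Delta$ as a nonzero multiple of $\omega(p_i)\cdot a_i$ and then check that multiplying by the double-pole principal part of $\hat\eta$ and pairing with $\xi_i\odot\xi_i$ recovers this factor up to a universal nonzero constant; the off-diagonal vanishing and the rank estimate, by contrast, follow almost immediately from the shape of $Q_s$ and the uniform position property.
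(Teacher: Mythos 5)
Your proposal is correct and follows essentially the same route as the paper: off-diagonal vanishing from Theorem \ref{copito}/\ref{interpretation} together with $t(p_i)=0$, diagonal entries proportional to $\mu_{1,L}(x\wedge y)(p_i)\cdot s(p_i)$ (your local expansion of $\tilde Q_s\cdot\hat\eta$ near $(p_i,p_i)$ just re-derives the factorisation $\mu_2(Q_s)=\mu_{1,L}(x\wedge y)\cdot\mu_{1,M}(t\wedge s)$ that the paper quotes directly), and the rank bound from the uniform position property exactly as in the text preceding the theorem.
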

\begin{proof} The fact that all the quadrics are in diagonal form follows from  \cite[Thm. 4.1]{ cfg}, or  from Theorem \ref{interpretation}. 
Recall formula \eqref{rhomu_equazione}: $\rho(Q_s)(\xi_{p_i} \odot \xi_{p_i}) = -2 \pi i \mu_2(Q_s) (u_i^4) = -2 \pi i \mu_{1,L}(x\wedge y)\cdot \mu_{1,M}(t\wedge s)(u_i^4)$ where $u_i=\partial/\partial z_i$ and $z_i$ is the coordinate centred at $p_i$. 
Since $\mu_{1,L}(x\wedge y)$ does not depend on $s$, we have to evaluate $ \mu_{1,M}(t\wedge s)$. Now  $ \mu_{1,M}(t\wedge s)= (t's-s't)(p_i)=(t's)(p_i)$  since $t(p_i)=0$ by construction.  Then
 $a_i$ is the evaluation of $s$ at $p_i$. \end{proof}
\subsection{Le zero locus of the quadrics}
For an element $z \in V$, write $z = \sum_{i=1}^d z_i \xi_{p_i}$ and denote by $[z]:=[z_1,...,z_d] \in  \bP^{d-1} \cong  \bP(V)$.
Consider the locus
$$Z=\{[z] \in \bP^{d-1}: \pi(s)(z \odot z)=\rho(Q_s)(z \odot z)=0,  \ \forall s\in W\}.$$ 
 First we have the following
\begin{lemma} Set $H=\{[z] \in \bP^{d-1}: z_i=0,\ i>r\},$ then $Z \cap H=\emptyset,$  therefore  $\dim Z=  d-r-1.$ 
 \end{lemma}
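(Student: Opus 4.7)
The plan is to unravel the statement via Theorem \ref{diago}, reduce the emptiness $Z \cap H = \emptyset$ to a linear-algebraic property of the image $j(W) \subset \bC^d$, and then extract the dimension by standard projective intersection theory.

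First I would use Theorem \ref{diago}: for $z = \sum_i z_i \xi_{p_i} \in V$ and $s \in W$ with $j(s) = (a_1,\ldots,a_d)$, one has
$$\pi(s)(z \odot z) = \lambda \sum_{i=1}^d a_i z_i^2.$$
Hence $[z] \in Z$ if and only if the vector $(z_1^2,\ldots,z_d^2) \in \bC^d$ lies in the orthogonal complement of the $r$-dimensional subspace $j(W) \subset \bC^d$.

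Next I would prove $Z \cap H = \emptyset$. Suppose by contradiction that $[z] \in Z \cap H$; then $z_{r+1} = \cdots = z_d = 0$, and the orthogonality condition reduces to $\sum_{i=1}^r a_i z_i^2 = 0$ for every $(a_1,\ldots,a_d) \in j(W)$. The main step is to show that the composition of $j$ with the projection $\bC^d \to \bC^r$ onto the first $r$ coordinates is an isomorphism of $W$ onto $\bC^r$. Injectivity uses the uniform linear position hypothesis (3): if some nonzero $s \in W$ had $j(s)$ vanishing in the first $r$ coordinates, then $j(s)$ would have at least $r$ zero entries, whereas (3) guarantees at most $r-1$; surjectivity then follows by dimension count. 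Consequently $(a_1,\ldots,a_r)$ ranges over all of $\bC^r$, forcing $z_1^2 = \cdots = z_r^2 = 0$, hence $z = 0$ — a contradiction with $[z] \in \bP^{d-1}$.

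Finally I would extract the dimension. Since $Z$ is cut out in $\bP^{d-1}$ by $r$ quadrics (the images $\pi(s_1),\ldots,\pi(s_r)$ of a basis of $W$), Krull's Hauptidealsatz gives that every irreducible component of $Z$ has dimension at least $d-r-1$. Conversely, $H$ is a linear subspace of $\bP^{d-1}$ of dimension $r-1$; if some component of $Z$ had dimension $\geq d-r$, the projective intersection inequality would force that component to meet $H$, contradicting the first part. Combining the two bounds yields $\dim Z = d-r-1$.

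The only real obstacle is the injectivity of the projection $j(W) \to \bC^r$, which is exactly where the uniform position assumption (3) enters; everything else is a bookkeeping of dimensions.
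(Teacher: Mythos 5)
Your proposal is correct and follows essentially the same route as the paper: both arguments rest on the uniform-position fact that a nonzero $s\in W$ has at most $r-1$ vanishing coordinates under $j$ (the paper packages this as the existence of the sections $s_i$ with $\pi(s_i)=a_{i,i}x_i^2+\sum_{j>r}a_{i,j}x_j^2$, which is just the explicit basis realizing your surjectivity of the projection $j(W)\to\bC^r$), and both conclude with the same intersection-theoretic dimension count, upper bound from $Z\cap H=\emptyset$ and lower bound from $Z$ being cut out by $r$ quadrics.
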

\begin{proof} 
Notice that by the uniform position (see e.g.  \cite[Ch.3]{acgh}), we know that for all $i \in \{1,...,r\}$, there is exactly  a section $s_i\in W$   such that  $s_i(p_j) =0$, $\forall j \in \{1,...,r\}$, $j \neq i$, $s_i(p_i) \neq 0$, $s_i(p_k) \neq 0$, $\forall k >r$, hence by Theorem \ref{diago} we get 
\begin{equation}
\label{bau}
\pi(s_i)=  a_{i,i}x^2_i+\sum_{j>r} a_{i,j}x^2_j,
\end{equation}
  with $a_{i,i} \neq 0$, $a_{i,j} \neq 0$, $\forall j >r$. 
Take $[z] = [z_1,...,z_r,0,...,0]\in H$ such that $\pi(s)(z \odot z) =\rho(Q_s)(z \odot z)=0$, $\forall s \in W$.
Set $Q_i:= Q_{s_i},$ then $\rho(Q_i)(z \odot z)= a_{i,i} z_i^2 =0$ $\forall i$ if and only if  $z_i=0$, $\forall i =1,...,r$, which is impossible, since $[z] \in H$.  Therefore we have  $H\cap Z=\emptyset$
and then $\dim Z \leq d-1-r.$ Notice that $Z=\{[v]\in \bP^{d-1}: \rho(Q_i)(v \odot v)=0, \ i=1,\dots,r\}$, so $\dim Z = d-1-r$.


\end{proof}

\subsection {Estimate}
We need to estimate the dimension of a linear space $\Pi\subset Z.$ Denote by $T$ the linear subspace of $V$ corresponding to $\Pi$. 

Consider the maps $\pi_i:V\to V$,  $\pi_i (x_1,\dots x_i, x_{i+1},\dots,x_d)= (0,\dots,0,x_{i+1},\dots x_d)$ 
The restriction of $\pi_i$ to $T$ is injective for $i \leq r$ since $\Pi\subset Z$. 
By formula \eqref{bau} we can see $\pi(s_r)$ as a quadric in $\pi_{r-1}(V)$. 

We have the inclusion $$\pi_{r-1}(T)\subset \{v \in \pi_{r-1}(V) \ | \ \pi(s_r)(v \odot v) = \rho(Q_{r})(v \odot v)=0\}.$$ Since
$\pi(s_{r})$ has rank $d-r+1$, $\dim(\pi_{r-1}(V)) = d-r+1$ and $\pi_{r-1}: T \la V$ is injective,  we get:
\begin{proposition}
\label{bound}
With the previous notation, let $\Pi$ be a linear subspace contained in $Z,$ and let $T$ be the corresponding  subspace  of $V$.  Then 
$$\dim T \leq  \frac{d-r+1}{2}.$$ 
\end{proposition}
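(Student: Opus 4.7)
The plan is to exploit the very explicit diagonal form of the quadrics $\pi(s_1),\ldots,\pi(s_r)$ coming from Theorem \ref{diago} together with the special sections $s_i$ produced by the uniform position property. The key is that for $i\in\{1,\ldots,r\}$ we have
\[
\pi(s_i)=a_{i,i}x_i^2+\sum_{j>r}a_{i,j}x_j^2,\qquad a_{i,i}\neq 0,\ a_{i,j}\neq 0\ \forall j>r,
\]
so each $\pi(s_i)$ is supported only on the coordinates $x_i$ and $x_{r+1},\ldots,x_d$.

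First I would verify that the projection $\pi_i\colon V\to V$ that kills the first $i$ coordinates restricts to an \emph{injective} map on $T$ whenever $i\leq r$. Indeed, if $v=(z_1,\ldots,z_d)\in T$ satisfies $\pi_i(v)=0$, then $z_{r+1}=\cdots=z_d=0$; evaluating $\pi(s_j)(v\odot v)$ for $j=1,\ldots,i$ gives $a_{j,j}z_j^2=0$ (the sum over $k>r$ vanishes because those coordinates are zero), so $z_1=\cdots=z_i=0$ and hence $v=0$. This uses $T\subset Z$ in an essential way.

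Next I would observe that $\pi(s_r)$ descends to a well-defined quadratic form on the quotient space $\pi_{r-1}(V)$, which has dimension $d-r+1$. Because all coefficients $a_{r,r},a_{r,r+1},\ldots,a_{r,d}$ are nonzero, this induced quadric is \emph{non-degenerate} on $\pi_{r-1}(V)$; this matches the statement of Theorem \ref{diago} that $\mathrm{rank}(\pi(s_r))=d-r+1$. Moreover, for every $v\in T$, one has
\[
\pi(s_r)\bigl(\pi_{r-1}(v)\odot\pi_{r-1}(v)\bigr)=a_{r,r}z_r^2+\sum_{k>r}a_{r,k}z_k^2=\pi(s_r)(v\odot v)=0,
\]
so $\pi_{r-1}(T)$ is an isotropic subspace of $\pi_{r-1}(V)$ for this non-degenerate quadric.

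To conclude, the dimension of an isotropic subspace of a non-degenerate quadric in a space of dimension $d-r+1$ is at most $\lfloor(d-r+1)/2\rfloor\leq(d-r+1)/2$. Combined with the injectivity of $\pi_{r-1}|_T$ established above, this yields $\dim T\leq(d-r+1)/2$, as claimed. The only delicate point in this argument is the bookkeeping that shows $\pi(s_r)$ actually defines a non-degenerate form on the right quotient space; once the diagonal structure of Theorem \ref{diago} and the uniform position property are in hand, everything else reduces to linear algebra.
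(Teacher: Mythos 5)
Your argument is correct and is essentially the paper's own proof: injectivity of $\pi_{r-1}$ on $T$ (via the vanishing of the $\pi(s_j)$ on $T$ and the diagonal form from Theorem \ref{diago}), followed by the observation that $\pi_{r-1}(T)$ is isotropic for the non-degenerate quadric $\pi(s_r)$ on $\pi_{r-1}(V)$, whose dimension is $d-r+1$. The only cosmetic difference is that you re-derive the injectivity directly from the diagonal form of the $\pi(s_j)$ rather than quoting the lemma $Z\cap H=\emptyset$; the content is the same.
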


\section{Application}
We assume that $C$ is a curve of genus $g>5,$ gonality $k$ computed by $L$ and assume that $C$ has no involutions and is not a smooth plane curve.
Then we can apply to the general section of $M=K_C-L$ the estimate of the previous section. We have:
$\deg M= 2g-2-k$ and $h^0(M)=g+1-k$,  that is $r=g-k.$   
Then if $t$ is a general section  of $M$ and $V$ is generated by the Schiffer variations
at the zeroes of $t$, by Proposition \ref{bound} we get that a linear space $T\subset V$  contained in the zero locus of the quadrics $\rho(Q_s)$, for $s \in W$  has dimension
\begin{equation}
\label{mao}
n\leq \frac {g-1}{2}.
\end{equation}
 Then we obtain the following
\begin{theorem}
\label{gonality}
If $C$ is a smooth curve of genus $g>5$, gonality $k$, it has no involutions and is not a smooth plane curve,  then any totally geodesic subvariety generically contained in the Torelli locus and passing through $j(C)$ has
dimension $$m\leq  \frac {3(g-1)}{2}+k.$$
\end{theorem}
\begin{proof} Let $S$ be the  tangent space at $[C]$ of a totally geodesic subvariety. Let $V$ be as above,  then $T=S\cap V$ is a linear subspace where all
the quadrics vanish, so by \eqref{mao} we get:  $\dim (S\cap V)\leq \frac{g-1}{2}$.  Then $\dim S+ \dim V \leq 3g-3 +\dim (S\cap V)$, hence 
$$\dim S\leq 3g-3 -(2g-k-2)+\frac{g-1}{2}= \frac {3(g-1)}{2}+k.$$ 
\end{proof}
\begin{theorem}
Let $Y$ be a germ of a totally geodesic submanifold generically contained in the Torelli locus $\Tg$, $g>5$,  then  
$\dim Y \leq 2g-1$ if $g$ is even, $\dim Y \leq 2g$ if $g$ is odd. 
\end{theorem}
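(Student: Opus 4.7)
The plan is to combine the bound from Theorem \ref{gonality} with the classical Brill--Noether bound on the gonality and to exploit the integrality of $\dim Y$. Concretely, let $Y$ be a germ of a totally geodesic submanifold generically contained in $\Tg$, with $g>5$. Choose a general curve $[C]\in j^{-1}(Y)$; since the locus of smooth plane curves and the locus of curves admitting a non-trivial involution are proper closed sub-strata of $\M_g$ for $g>5$, a generic choice avoids them and the hypotheses of Theorem \ref{gonality} are met. This yields
$$\dim Y \;\le\; \frac{3(g-1)}{2}+k,$$
where $k=\mathrm{gon}(C)$.

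Next I would substitute the classical Brill--Noether bound $k\le\lfloor (g+3)/2\rfloor$ and split on the parity of $g$. If $g=2m$ is even, then $k\le m+1$, so
$$\dim Y \;\le\; \frac{3(2m-1)}{2}+m+1\;=\;\frac{8m-1}{2};$$
since $\dim Y$ is an integer, this forces $\dim Y\le 4m-1=2g-1$. If $g=2m+1$ is odd, then $k\le m+2$, and
$$\dim Y \;\le\; 3m+m+2\;=\;4m+2\;=\;2g,$$
which is the desired inequality (no integrality rounding is needed in this case).

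The main obstacle is the possibility that \emph{every} $[C]\in j^{-1}(Y)$ is exceptional, i.e.\ that $Y$ lies entirely in the smooth-plane-curve locus or in a locus of curves with a non-trivial involution, so that Theorem \ref{gonality} never applies directly to a point of $Y$. In both cases the gonality is in fact strictly smaller than $\lfloor (g+3)/2 \rfloor$ (plane curves of degree $d$ have gonality $d-1=O(\sqrt g)$; a curve admitting an involution inherits small-degree pencils from the quotient), so one expects an even better bound. The clean way to dispose of these cases is either to adapt the construction of Section~4 by substituting a different pencil $L$ when $C\to|K_C-L|$ fails to be birational, or to produce a curve $[C']\in j^{-1}(Y)$ near $[C]$ for which the hypotheses hold by a small deformation inside $Y$ (using that $Y$ is totally geodesic and hence is spanned by geodesics issuing from any of its points). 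Once the exceptional strata are handled, the arithmetic above completes the proof.
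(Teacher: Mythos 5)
Your argument is, in substance, exactly the paper's: the proof given there consists of invoking Theorem \ref{gonality} together with the bound $k\le[(g+3)/2]$, and the parity arithmetic you carry out (including the integrality rounding $\dim Y\le 4m-\tfrac12\Rightarrow\dim Y\le 4m-1$ for even $g$) is precisely what is intended. The only point of divergence is your closing paragraph on the exceptional hypotheses of Theorem \ref{gonality} (no involutions, not a smooth plane curve): the paper's proof is completely silent on this, so what you have flagged is a loose end in the paper rather than a defect of your argument relative to it. For the record, the two exceptional strata behave differently, and neither of your two proposed fixes is needed in the form you state. If $j^{-1}(Y)$ lies in the locus of curves admitting an involution with quotient of genus $g'$, a plain dimension count suffices: that locus has dimension $2g-g'-1\le 2g-1$, so $\dim Y\le 2g-1$ with no totally geodesic input at all. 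If $j^{-1}(Y)$ lies in the locus of smooth plane curves of degree $d$ (possible only when $g=(d-1)(d-2)/2$), that locus has dimension $d(d+3)/2-8$, which is at most $2g-1$ for all $d\ge 6$; the single problematic value with $g>5$ is $d=5$, $g=6$, where the plane quintics form a $12$-dimensional locus while the claimed bound is $11$. In that residual case your suggestion of deforming inside $Y$ to a non-exceptional curve cannot work, since $j^{-1}(Y)$ would have to coincide with the plane-quintic locus itself, and one genuinely needs a separate argument (e.g.\ showing the image of the plane-quintic locus is not totally geodesic, or adapting the pencil construction as you suggest). So: same approach and same arithmetic as the paper; the gap you identify is real, is shared by the paper's own proof, and reduces after the above dimension counts to the single case of plane quintics in genus $6$.
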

\begin{proof} 
For $g>5$ the result follows by Theorem \ref{gonality}, recalling that $k\leq [(g+3)/2].$ For $g=4,5$ the result follows from \cite[Thm.4.4]{cfg}. 
\end{proof}



%
\section{The hyperelliptic locus}

Assume that $C$ is a hyperelliptic curve of genus $g \geq 3$. Denote by $L$ the line bundle giving the $g^1_2$, $H^0(L) = \langle x,y\rangle$. Set $M = K_C(-L)$, denote by $\pi:C \rightarrow \bP^1$ the map induced by $|L|$. Call $\nu_n: \bP^1 \hookrightarrow \bP^{n}$ the $n$th Veronese embedding.  The canonical map is the composition $\nu_{g-1} \circ \pi$, so $K_C \cong \pi^*({\mathcal O}_{{\bP}^1}(g-1)) \cong L^{\otimes (g-1)}$ and $M \cong  L^{\otimes (g-2)}$. Then $H^0(C,M) \cong H^0(\bP^1, {\mathcal O}_{{\bP}^1}(g-2))$  has dimension $r+1= g-1$, so $r = g-2$.  Denote by $\sigma$ the hyperelliptic involution and write   $H^0(C, 2K_C) \cong H^0(C, 2K_C)^+ \oplus  H^0(C, 2K_C)^-$ the decomposition of  $H^0(C, 2K_C)$ in invariant and anti-invariant subspaces by the action of $\sigma$.  By the projection formula one gets: $H^0(C, 2K_C)^+ \cong H^0(\bP^1, {\mathcal O}_{{\bP}^1}(2g-2))$, $H^0(C, 2K_C)^- \cong H^0(\bP^1, {\mathcal O}_{{\bP}^1}(g-3))$. 
	Denote by ${\mathsf {HE}}_g$ the hyperelliptic locus in ${\mathsf M}_g$ and by $j_{h}: {\mathsf {HE}}_g \rightarrow {\mathsf A}_g$ the restriction of the Torelli map to ${\mathsf {HE}_g}$. Then $j_h$ is an orbifold immersion and we have the following tangent bundle exact sequence


\begin{gather}
\label{tangent}
  \xymatrix{& & 0 \ar[d] & & &\\
    &0\ar[r] &  T_{{\mathsf {HE}}_g}\ar[d]\ar[r] &  {T_{{\mathsf A}_g}}_{|{\mathsf{HE}_g}}\ar[d]^=\ar[r] & N_{{\mathsf {HE}}_g/{\mathsf A}_g} \ar[r]\ar[d] & 0\\
    &  &{T_{{\mathsf M}_g}}_{|{\mathsf HE}_g} \ar[r]&  {T_{{\mathsf A}_g}}_{|{\mathsf{HE}_g}}\ar[r]& {N_{{\mathsf {M}}_g/{\mathsf A}_g}}_{| {{\mathsf {HE}}_g}}
     \ar[r] &0\\
    & & & &  &}
\end{gather}

Denote by 
\begin{equation}
\rho_{HE}: N^*_{{\mathsf {HE}}_g|{\mathsf A}_g} \rightarrow S^2 T_{{\mathsf {HE}}_g}
\end{equation}
the dual of the second fundamental form of $j_h$. 

At a point $[C] \in {\mathsf {HE}}_g$, the dual of \eqref{tangent} is

\begin{gather}
\label{cotangent}
  \xymatrix{
        &0 \ar[r] & I_2(K_{C}) \ar[d]^=\ar[r]& S^2 H^0(K_{C})\ar[d]^=\ar[r]^m& H^0(2K_{C})\ar[d] &\\
    &0\ar[r] & I_2(K_{C})\ar[r] &  S^2 H^0(K_{C})\ar[r]^m &  H^0(2K_{C})^+\ar[r] & 0}
\end{gather}


and $I_2(K_C)$ can be identified with the set of quadrics containing the rational normal curve.  

We have $\forall Q \in I_2(K_C)$, $\forall v,w \in H^1(T_C)^+$, $\rho_{HE}(Q) (v \odot w) = \rho(Q)(v \odot w)$ (see \cite[Prop. 5.1]{ cf1}).  Here  we denote by $\rho(Q)$ the section $ Q \cdot \hat{\eta} $, seen as an element in $S^2 H^0(K_C)$ as in Theorem  \ref{prodotto}. 

Take $t \in H^0(M) $ a generic section and denote by $D(t)$ its divisor. It has degree $2g-4$ and  it is invariant by the action of $\sigma$, hence $D(t) = q_1 + \sigma(q_1)+...+q_{g-2} + \sigma(q_{g-2})$. Write as above $H^0(M) = \langle t \rangle \oplus W$. Since $H^0(C,M) \cong H^0(\bP ^1, {\mathcal O}_{\bP^1}(g-2))$, to the section $t \in H^0(C, M)$ corresponds a section $\overline{t} \in  H^0(\bP ^1, {\mathcal O}_{\bP^1}(g-2))$, whose zero divisor is $\overline{D} = r_1 + ...+r_{g-2}$ where $r_i = \pi(q_i)$, $\forall i$.  Denote by $\overline{W}$ the isomorphic image of $W$ in $H^0(\bP ^1, {\mathcal O}_{\bP^1}(g-2))$. For every $i \in \{1,...,g-2\}$ there exists a unique section $\bar{s_i} \in \overline{W}$ such that $\bar{s_i}(r_j) =0 $ for all $j \neq i$ and $\bar{s_i}(r_i) \neq 0$. The section $\bar{s_i}$ coresponds to a section $s_i \in W$ such that $s_i(q_j) = 0$ for all $j \neq i$ and $s_i(q_i) \neq 0$. 

For every section $s \in W$ consider as above the quadric 
$$Q_s = (xt) \odot (ys) - (xs) \odot (yt) \in I_2(K_C),$$
and the tangent vectors $v_i := \xi_{q_i} + \xi_{\sigma(q_i)}$, $i =1,...,g-2$. Clearly $v_i \in H^1(T_C)^+$, hence by the $\sigma$-invariance, and using \eqref{rhomu_equazione},  if $i \neq j$ we have
$$\rho_{HE}(Q_s)(v_i \odot v_j) = \rho(Q_s)(v_i \odot v_j)=   2\rho(Q_s)(\xi_{q_i} \odot \xi_{q_j}) +  2\rho(Q_s)(\xi_{q_i} \odot \xi_{\sigma(q_j)}) = $$
$$=8 \pi i (Q_s(q_i, q_j) \eta_{q_i}(q_j) + Q_s(q_i, \sigma(q_j) )\eta_{q_i}(\sigma(q_j))  ) =0,$$
since $q_i, q_j$ and $\sigma(q_j)$ are in the zero locus of $t$. 
On the other hand we have 
  $$\rho_{HE}(Q_s)(v_i \odot v_i) = \rho(Q_s)(v_i \odot v_i)=   2\rho(Q_s)(\xi_{q_i} \odot \xi_{q_i}) +  2\rho(Q_s)(\xi_{q_i} \odot \xi_{\sigma(q_i)}) = $$
$$=8 \pi i (2\mu_2(Q_s)(q_i) + Q_s(q_i, \sigma(q_j)) \eta_{q_i}(\sigma(q_j))  ) =16 \pi i \mu_2(Q_s)(q_i) = $$
$$=16 \pi i \mu_{1,L}(x \wedge y)(q_i) \mu_{1, M}(s \wedge t)(q_i) = 16 \pi i a_i,$$
where  $a_i =0$ if and only if $s(q_i) =0$. 

So if we denote by $V = \langle v_1,...,v_{g-2}\rangle$,  we have shown that $\rho_{HE}(Q_s)_{|V}$ is diagonal with diagonal entries equal to $ca_i$, $c = 16 \pi i$. 
\begin{proposition}
\label{HE}
Assume that $T \subset H^1(T_C)^+$ is a linear subspace which is isotropic with respect to all the quadrics $\rho_{HE}(Q_{s})$, $\forall s \in W$, then $T \cap V = \{0\}$. 

\end{proposition}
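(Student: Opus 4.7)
The plan is to combine the diagonalization of $\rho_{HE}(Q_s)|_V$ displayed immediately before the statement with the particular sections $s_1, \dots, s_{g-2} \in W$ that are constructed there via Lagrange interpolation on $\bP^1$. The point is that since we have exactly $g-2$ points $q_i$ (up to the hyperelliptic involution) and a pencil of dimension $g-2$ of sections of $M$, each $s_i$ can be made to vanish on all the $q_j$ except one, which is the maximal possible concentration.

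From the computation preceding the proposition, $\rho_{HE}(Q_s)(v_i \odot v_j) = 0$ for $i\neq j$, while
\[
\rho_{HE}(Q_s)(v_i \odot v_i) \;=\; 16\pi i\, a_i,
\]
with $a_i = \mu_{1,L}(x\wedge y)(q_i)\,\mu_{1,M}(s\wedge t)(q_i)$. For a generic choice of $t$, the zeros of the Wahl map $\mu_{1,L}(x\wedge y)$ avoid the divisor $D(t)$ and $t$ has simple zeros, so $\mu_{1,L}(x\wedge y)(q_i)$ and $t'(q_i)$ are nonzero; since $t(q_i) = 0$ one has $\mu_{1,M}(s\wedge t)(q_i) = s(q_i)\,t'(q_i)$, and therefore $a_i = 0$ if and only if $s(q_i) = 0$. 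Substituting $s = s_i$ and using $s_i(q_j) = 0$ for $j\neq i$ together with $s_i(q_i)\neq 0$, we conclude that $\rho_{HE}(Q_{s_i})|_V$ is diagonal in the basis $\{v_1,\dots,v_{g-2}\}$ with the unique nonzero entry sitting in position $i$.

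Now for any $z \in T \cap V$ write $z = \sum_{j=1}^{g-2} z_j v_j$. The isotropy assumption gives $\rho_{HE}(Q_{s_i})(z\odot z) = 0$ for every $i = 1,\dots,g-2$, which collapses to $c_i z_i^2 = 0$ with $c_i\neq 0$. Hence $z_i = 0$ for all $i$, so $z = 0$.

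I do not foresee a real obstacle: once the diagonalization is in place the argument is essentially mechanical, and in fact it is considerably cleaner than its non-hyperelliptic analogue (the lemma $H \cap Z = \emptyset$ in Section~4), which only produced quadrics with $d-r+1$ nonzero diagonal entries. The reason for the simplification is that on $\bP^1$ one has exactly $g-1$ sections of $\mathcal{O}(g-2)$ and the divisor $\overline{D}$ has exactly $g-2$ points, so each interpolating $\bar{s}_i$ can be supported at a single $r_i$, forcing each quadric $\rho_{HE}(Q_{s_i})|_V$ to have rank one. The only thing one must verify beyond the displayed computation is the genericity of $t$, which is already built into the setup.
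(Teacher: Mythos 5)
Your argument is correct and coincides with the paper's own proof: both use the interpolating sections $s_i$ to produce quadrics whose restriction to $V$ is diagonal with the single nonzero entry $c\,a_i$ in position $i$, so that isotropy forces each coordinate $z_i$ of an element of $T\cap V$ to vanish. The extra remarks on the genericity of $t$ and the nonvanishing of $\mu_{1,L}(x\wedge y)(q_i)$ are consistent with the setup already established before the statement.
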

\proof 
If $v = z_1 v_1 + ...+ z_{g-2} v_{g-2} \in V$, then $\rho_{HE}(Q_{s_i})(v \odot v) = c a_i z_i^2$, with $a_i \neq 0$. Hence $\rho_{HE}(Q_{s_i})(v \odot v) =0$ if and only if $z_i =0$. 
\qed
\begin{theorem}
Let $Y$ be a germ of a totally geodesic submanifold of ${\mathsf A}_g$ contained in the hyperelliptic locus, then $\dim Y \leq g+1$. 
\end{theorem}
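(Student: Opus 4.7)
The plan is to combine Proposition \ref{HE} with a dimension count inside $H^1(T_C)^+$. First, since $Y$ is contained in the hyperelliptic locus, its tangent space $S := T_{[C]}Y$ lies in $T_{[C]}\mathsf{HE}_g$, which we identify with $H^1(T_C)^+$, the invariant part for the hyperelliptic involution. Recall that $\dim \mathsf{HE}_g = 2g-1$, so $\dim H^1(T_C)^+ = 2g-1$.

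Next, I would translate the totally geodesic hypothesis into an isotropy statement for $\rho_{HE}$. If $Y$ is totally geodesic in $\mathsf{A}_g$ and contained in $\mathsf{HE}_g$, then for any $v,w \in S$ the covariant derivative $\nabla^{\mathsf{A}_g}_v w$ stays tangent to $Y$, hence in particular tangent to $\mathsf{HE}_g$. This forces the normal component given by the second fundamental form $II_{HE}$ of $j_h$ to vanish on $S \odot S$. Dually, this means $\rho_{HE}(Q)(v \odot w) = 0$ for every $Q \in I_2(K_C)$ and every $v, w \in S$; in particular $S$ is isotropic for each quadric $\rho_{HE}(Q_s)$ with $s \in W$.

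Now I would apply Proposition \ref{HE}, which says exactly that such an isotropic subspace $S$ of $H^1(T_C)^+$ satisfies $S \cap V = \{0\}$, where $V = \langle v_1,\dots,v_{g-2}\rangle$ with $v_i = \xi_{q_i}+\xi_{\sigma(q_i)}$. A short preliminary step verifies that $\dim V = g-2$: the Schiffer variations $\xi_{q_1},\dots,\xi_{q_{g-2}},\xi_{\sigma(q_1)},\dots,\xi_{\sigma(q_{g-2})}$ at the $2g-4$ distinct zeroes of the generic section $t \in H^0(M)$ are linearly independent (as in the proof of Theorem \ref{diago}, cf.\ \cite[Thm.~4.1]{cfg}), so their symmetrized combinations $v_i$ give a $(g-2)$-dimensional subspace of $H^1(T_C)^+$.

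The conclusion is then a Grassmann dimension inequality inside $H^1(T_C)^+$:
\[
\dim S + \dim V \;\leq\; \dim H^1(T_C)^+ + \dim(S \cap V) \;=\; (2g-1) + 0,
\]
which gives $\dim S \leq 2g-1-(g-2) = g+1$, i.e.\ $\dim Y \leq g+1$. The only non-bookkeeping step is the isotropy reduction and the correct identification $T_{[C]}\mathsf{HE}_g = H^1(T_C)^+$; once Proposition \ref{HE} is in hand, the bound follows by a one-line dimension count, so the main substance of the argument has already been carried out in that proposition.
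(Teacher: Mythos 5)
Your proposal is correct and follows the paper's own proof essentially verbatim: isotropy of $T_{[C]}Y$ for the quadrics $\rho_{HE}(Q_{s_i})$, Proposition \ref{HE} to get $T_{[C]}Y \cap V = \{0\}$, and the dimension count $\dim Y + (g-2) \leq \dim H^1(T_C)^+ = 2g-1$. Your added remarks justifying $\dim V = g-2$ and spelling out the isotropy reduction are fine but do not change the route.
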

\proof
If $Y$ is a germ of a totally geodesic submanifold of  ${\mathsf {HE}}_g$ passing to $[C]$,  its tangent space at $[C]$ is isotropic with respect to all the quadrics $\rho_{HE}(Q)$ with $Q \in I_2(K_C)$,  hence to all the quadrics  $\rho_{HE}(Q_{s_i})$, therefore $T_{[C]}Y \cap V =\{0\}$ by Proposition \ref{HE}.
Thus $\dim T_{[C]}Y + \dim V = \dim Y + g-2 \leq \dim H^1(T_C)^+ = 2g-1$, so $\dim Y \leq g+1$. 
\qed

\begin{proposition}
Let $Y$ be a germ of a totally geodesic submanifold of ${\mathsf A}_3$,  contained in the hyperelliptic locus, then $\dim Y \leq 3$. 
\end{proposition}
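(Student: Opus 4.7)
The plan is to refine the proof of $\dim Y\le g+1$ (which only gives $\dim Y\le 4$ for $g=3$) by a finer rank analysis of the single quadric available in this case. For a hyperelliptic curve of genus $3$, the space $I_2(K_C)$ is one-dimensional, generated by the rank-$3$ conic $Q_0=\omega_0\omega_2-\omega_1^2$ containing the canonical image, so the totally geodesic hypothesis reduces to the isotropy of $T:=T_{[C]}Y\subset H^1(T_C)^+$ with respect to the single symmetric form $\rho_{HE}(Q_0)$ on the $5$-dimensional space $H^1(T_C)^+$.

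First I would import the setup of the previous section: since $K_C=2L$ for $g=3$, one has $M=K_C-L\cong L$ with $h^0(M)=2$. For generic $t\in H^0(M)$ with $D(t)=q+\sigma(q)$, Proposition \ref{HE} gives $T\cap\langle v(q)\rangle=\{0\}$, where $v(q)=\xi_q+\xi_{\sigma(q)}\in H^1(T_C)^+$. Using the hyperelliptic quotient $\pi:C\to\bP^1$, the identification $H^0(2K_C)^+\cong H^0(\bP^1,\cO(2g-2))=H^0(\bP^1,\cO(4))$ realises $\bP(H^1(T_C)^+)$ as $\bP^4$, and the map $\pi(q)\mapsto[v(q)]$ is the degree-$4$ Veronese embedding, so the image is a rational normal quartic $\Gamma\subset\bP^4$.

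The crux is to show that $\rho_{HE}(Q_0)$ has rank at least $3$ on $H^1(T_C)^+$. By Theorem \ref{copito} and Theorem \ref{diago}, the diagonal value $\rho_{HE}(Q_0)(v(q),v(q))$ equals $16\pi i\,\mu_{1,L}(x\wedge y)(q)\,\mu_{1,L}(s\wedge t)(q)$, which vanishes to order $2$ at each Weierstrass point, hence descends to a polynomial $\bar a\in H^0(\bP^1,\cO(8))$ with $8$ simple zeros at the images of the branch points of $\pi$. Restricted to $\Gamma$ the quadric $\rho_{HE}(Q_0)$ equals $\bar a$, which rules out rank $1$ (that would give a perfect square) and constrains a putative rank-$2$ factorisation $\rho_{HE}(Q_0)=L_1\odot L_2$ to partition the $8$ branch points into two groups of $4$. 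To rule out the rank-$2$ case I would compute the off-diagonal pairings $\rho_{HE}(Q_0)(v(q_1),v(q_2))$ via Theorem \ref{copito}: using $Q_0(u_{q_1},u_{q_2})=(u_1-u_2)^2/(2y_1 y_2)$ (from $\omega_i=u^i\,du/y$) together with the $\sigma\times\sigma$-parity expansion of $\hat\eta$, the four summands $\rho(Q_0)(\xi_{\sigma^{\epsilon_1}q_1},\xi_{\sigma^{\epsilon_2}q_2})$ combine so that the $Q_0$-factor appears with sign $(-1)^{\epsilon_1+\epsilon_2}$, producing an explicit bi-polynomial in $(u_1,u_2)$; one then exhibits three non-Weierstrass points $q_1,q_2,q_3$ for which the $3\times 3$ Gram matrix $[\rho_{HE}(Q_0)(v(q_i),v(q_j))]$ is nonsingular, certifying rank $\geq 3$.

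Once rank $\rho_{HE}(Q_0)\ge 3$ is established, maximal isotropic subspaces in the $5$-dimensional $H^1(T_C)^+$ have dimension at most $5-\lceil 3/2\rceil=3$, so $\dim T\le 3$ and $\dim Y\le 3$. The main obstacle is the rank-$\geq 3$ step: Proposition \ref{rankQ'} applied to $\rho(Q_0)$ on all of $H^1(T_C)$ only gives rank $\ge 3$, but the anti-invariant Schiffer combination $v^-(q)=\xi_q-\xi_{\sigma(q)}$ pairs non-trivially---using $Q_0(u_q,u_{\sigma(q)})=0$ and Theorem \ref{copito} one computes $\rho(Q_0)(v^-(q),v^-(q))=-4\pi i\,\mu_2(Q_0)(u_q^{\otimes 4})$, which is nonzero off the Weierstrass locus---so $H^1(T_C)^-$ contributes exactly $1$ to the rank, leaving by this naive count only rank $\geq 2$ for $\rho_{HE}(Q_0)$. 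The extra unit of rank therefore has to come from the direct hyperelliptic computation sketched above.
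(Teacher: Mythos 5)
Your overall skeleton matches the paper's: for a hyperelliptic curve of genus $3$ the space $I_2(K_C)$ is one-dimensional, generated by the smooth conic $Q_0$, which has rank $3$, and the bound $\dim Y\le 3$ follows once one knows that the image quadric has rank at least $3$ on the $5$-dimensional space $H^1(T_C)^+$, since a rank-$r$ quadric on an $n$-dimensional space has isotropic subspaces of dimension at most $n-\lceil r/2\rceil$. The difference lies in how that rank bound is obtained. The paper simply invokes Proposition \ref{rankQ'}: since $\mathrm{rank}\,Q_0=3$, the divisor $Z(Q_0)=2R_h+2F$ has a multiple component, and the identity $Z(\rho(Q_0))=Z(Q_0)+Z(\hat{\eta})$ on $C\times C$ is incompatible with $\rho(Q_0)$ having rank $\le 2$; this argument never requires any explicit knowledge of $\hat{\eta}$. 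You instead set out to prove the rank bound by a direct computation on $H^1(T_C)^+$, and this is where your proposal has a genuine gap.

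Concretely: your exclusion of rank $1$ (the restriction of $\rho_{HE}(Q_0)$ to the rational normal quartic is a degree-$8$ form with $8$ simple zeros at the branch points, hence not a square) is fine. But the exclusion of rank $2$ is only announced, not proved: you say one should compute the off-diagonal Gram entries $\rho_{HE}(Q_0)(v(q_i)\odot v(q_j))$ and ``exhibit three non-Weierstrass points for which the $3\times 3$ Gram matrix is nonsingular''. By Theorem \ref{copito} those entries are of the form $-4\pi i\,\eta_{q_i}(u_i)(u_j)\,Q_0(u_i,u_j)$ summed over the $\sigma$-orbits, so they involve the actual off-diagonal values of $\hat{\eta}$ --- precisely the quantity the paper stresses is ``very mysterious'' and very hard to compute. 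Without carrying out that computation you have only established rank $\ge 2$, which yields $\dim T\le (5-2)+1=4$, i.e.\ nothing beyond the general bound $g+1$. Your underlying concern --- that Proposition \ref{rankQ'} controls the rank of $\rho(Q_0)$ on all of $H^1(T_C)$ (dimension $6$) while the isotropy count takes place in $H^1(T_C)^+$ (dimension $5$), so that the anti-invariant line could a priori absorb one unit of rank --- is a fair observation about the terseness of the paper's own argument, but flagging the issue is not the same as resolving it; as written, the decisive step of your proof is missing.
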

\proof

If $g =3$ then the dimension of the space of quadrics containing the rational normal curve is one and this space is generated by the rational normal curve, which is a smooth conic $Q$. By Proposition \ref{rankQ'}, the rank of $\rho(Q)$ is at least 3, hence $\dim(Y) \leq 5-3+\frac{3}{2} = \frac{7}{2}$. 
\qed

\begin{remark}
Families (8) and (22) of Table 2 of \cite{fgp} (see also Tables 1 and 2 of \cite{moonen-oort}) yield respectively a two-dimensional and a one-dimensional Shimura (hence totally geodesic) subvariety of ${\mathsf A}_3$ generically contained in the hyperelliptic Torelli locus. 

Family (36) of Table 2 of \cite{fgp} yields  a one-dimensional Shimura (hence totally geodesic) subvariety of ${\mathsf A}_4$ generically contained in the hyperelliptic Torelli locus. 

Family (39)  of Table 2 of \cite{fgp} yields  a one-dimensional Shimura (hence totally geodesic) subvariety of ${\mathsf A}_5$ generically contained in the hyperelliptic Torelli locus. 

\end{remark}
\proof
See \cite{fgp} 4.6. 
\qed


\begin{thebibliography}{10}
\bibitem{acgh} Arbarello,~E., Cornalba,~M., Griffiths,~P., Harris, ~J. {\em
Geometry of algebraic curves, Vol. I}, Grundlehren der
Mathematischen Wissenschaften, 267. Springer-Verlag, New York,
1985.




\bibitem{cf1} E.~Colombo and P.~Frediani.  \newblock Siegel metric and
  curvature of the moduli space of curves.  \newblock {\em
    Trans. Amer. Math. Soc.}, 362(3):1231--1246, 2010.
    
 \bibitem{cfg}
E.~Colombo, P.~Frediani, and A.~Ghigi.
\newblock On totally geodesic submanifolds in the {J}acobian locus.
\newblock {\em International Journal of Mathematics}, 26(01):1550005, 2015.


\bibitem{cpt} E.~Colombo, G.~P. Pirola, and A.~Tortora.  \newblock
  Hodge-{G}aussian maps.  \newblock {\em Ann. Scuola Norm. Sup. Pisa
    Cl. Sci. (4)}, 30(1):125--146, 2001.


\bibitem{fgp} P.~Frediani, A.~Ghigi and M.~Penegini. \newblock Shimura
  varieties in the Torelli locus via Galois coverings. {\em
    Int. Math. Res. Not.} 2015, no. 20, 10595-10623.

\bibitem{deba} A.~Ghigi.  \newblock On some differential-geometric
  aspects of the Torelli map. 
  \newblock  {\em Boll. Unione. Mat. Ital.}, 12 (2019), no. 1-2, 133-144.

\bibitem{gpt} A.~Ghigi, ~P. Pirola, ~S. Torelli.  \newblock Totally geodesic subvarieties in the moduli space of curves.  \newblock {\em arXiv:1902.06098}. To appear on Communications in Contemporary Mathematics. https://doi.org/10.1142/S0219199720500200. 







  
  \bibitem{gst} V.~{Gonz{\'a}lez-Alonso}, L.~{Stoppino}, and
  S.~{Torelli}.  \newblock {On the rank of the flat unitary factor of
    the Hodge bundle}.  \newblock {\em ArXiv: 1709.05670}. To appear in {\em Transactions of the AMS.}



 \bibitem{gun} R.C.~{Gunning}. \newblock {Some topics in the function theory of compact Riemann surfaces}.  \newblock {https://web.math.princeton.edu/~gunning/book.pdf}. 
\bibitem{lu-zuo} X.~Lu, K.~Zuo.  \newblock The Oort conjecture on Shimura curves in the Torelli locus of hyperelliptic curves.  \newblock {\em  J. Math. Pures Appl.} (9) 108 (2017), no. 4, 532-552.


\bibitem{moonen-JAG} B.~Moonen.  \newblock Linearity properties of
  {S}himura varieties. {I}.  \newblock {\em J. Algebraic Geom.}, 7(3):539--567, 1998.

\bibitem{moo} B.~Moonen.  \newblock Special subvarieties arising from
 families of cyclic covers of the projective line.  \newblock {\em  Doc. Math.}, 15:793--819, 2010.

\bibitem{moonen-oort} B.~Moonen and F.~Oort.  \newblock The {T}orelli
  locus and special subvarieties.  \newblock In {\em {H}andbook of
    {M}{oduli: Volume II}}, pages 549--94.  International {P}ress,
  Boston, MA, 2013.


\bibitem{os} F.~Oort and J.~Steenbrink.  \newblock The local {T}orelli
  problem for algebraic curves.  \newblock In {\em Journ\'ees de
    {G}\'eometrie {A}lg\'ebrique d'{A}ngers, {J}uillet
    1979/{A}lgebraic {G}eometry, {A}ngers, 1979}, pages 157--204.
  Sijthoff \& Noordhoff, Alphen aan den Rijn, 1980.
  
  

\bibitem{pt} G.~P. Pirola, ~S. Torelli.  \newblock Massey producs and Fujita decompositions on fibrations of curves. \newblock {\em  arXiv:1710.02828v2}. To appear in {\em Collectanea Mathematica}.
https://doi.org/10.1007/s13348-019-00247-4. 





\bibitem{wh} J.~Wahl.  \newblock
Introduction to Gaussian maps on an algebraic curve. \newblock {\em Complex projective geometry (Trieste and Bergen, 1989)}, London Math. Soc. Lecture Note Ser., 179, pp. 304-323, Cambridge Univ. Press, Cambridge, 1992.

\end{thebibliography}
\end{document}